\def\draw #1 by #2 (#3){
	\vbox to #2{
		\hrule width #1 height 0pt depth 0pt
		\vfill
		\special{picture #3} 
	}
}
\def\scaleddraw #1 by #2 (#3 scaled #4){{
		\dimen0=#1 \dimen1=#2
		\divide\dimen0 by 1000 \multiply\dimen0 by #4
		\divide\dimen1 by 1000 \multiply\dimen1 by #4
		\draw \dimen0 by \dimen1 (#3 scaled #4)}
}
\newtheorem{theorem}{Theorem}[section]
\newtheorem{example}[theorem]{Example}
\newtheorem{problem}[theorem]{Problem}
\newtheorem{defin}[theorem]{Definition}
\newtheorem{lemma}[theorem]{Lemma}
\newtheorem{nt}{Note}
\newcommand{\singlespacing}{\let\CS=\@currsize\renewcommand{\baselinestretch}{1}\tiny\CS}
\newcommand{\oneandahalfspacing}{\let\CS=\@currsize\renewcommand{\baselinestretch}{1.25}\tiny\CS}
\newcommand{\doublespacing}{\let\CS=\@currsize\renewcommand{\baselinestretch}{1.35}\tiny\CS}
\newtheorem{rule-def}[theorem]{Rule}
\renewcommand{\baselinestretch}{1.5}
\numberwithin{equation}{section}
\begin{document}
	\baselineskip 16pt
	
	\newcommand{\la}{\lambda}
	\newcommand{\si}{\sigma}
	\newcommand{\ol}{1-\lambda}
	\newcommand{\be}{\begin{equation}}
		\newcommand{\ee}{\end{equation}}
	\newcommand{\bea}{\begin{eqnarray}}
		\newcommand{\eea}{\end{eqnarray}}
     \begin{center}   {\Large\textbf{Extremal $ABS$ Spectral Radius in Bicyclic and Bipartite Unicyclic Graphs}}
     	\\
     	\vspace{8mm}
     	
     	{\large \bf 
     	Swathi Shetty$^{1}$, B. R. Rakshith$^{*,2}$, Sayinath Udupa N. V. $^{3}$}\\
     	\vspace{6mm}
     	
     	\baselineskip=0.20in
     	
     	{\it
     	Department of Mathematics, Manipal Institute of Technology\\ Manipal Academy of Higher Education\\ Manipal 576104, India.}\\
     {\rm E-mail:} {\tt
     	swathi.dscmpl2022@learner.manipal.edu$^{1}$\\
     	 rakshith.br@manipal.edu$^{*,2}$\\
     	sayinath.udupa@manipal.edu	$^{3}$.}
     \end{center}
 \[\text{September 10, 2025}^\dagger\]
 \footnotetext{*Corresponding author; $\dagger$ Initial journal submission date}
 
     \begin{abstract}
    The $ABS$ spectral radius of a graph 
    $G$ is defined as the largest eigenvalue of its $ABS$ matrix. Motivated by recent studies on this parameter, in this paper, we determine the bipartite unicyclic graphs that attain the largest $ABS$ spectral radius. Furthermore, we characterize the bicyclic graphs that attain the largest and the second largest $ABS$ spectral radii.
    
	\bigskip
\noindent 
      \textbf{Mathematics Subject Classifications:} 05C50, 05C35, 05C05.
      
      \noindent
     \textbf{Keywords:} $ABS$ matrix, $ABS$ spectral radius, bipartite unicyclic graphs, bicyclic graphs.
\end{abstract}
  \section{Introduction}
Let $G$ be a simple undirected graph  with  vertex
set $V (G) = \left\{v_1, v_2,\dots , v_n\right\}$ and edge set $E(G)$. As usual, we denote the path graph and the cycle graph on $n$ vertices, by $P_{n}$ and $C_{n}$, respectively.  The well-known adjacency matrix of $G$ is denoted by $A(G)$, and its $i$-th largest eigenvalue is given by $\lambda_{i}$. In recent years, extensive research has been carried on  the spectral properties of topological matrices. The first Zagreb matrix, the $ABC$ matrix, and the Sombor matrix are among the recently introduced degree-based topological matrices that have been extensively investigated in the literature (see \cite{li2020abc,das2024first,mei2023extreme,tabassum2023relationship,liu2021spectral} for details). The atom-bond sum-connectivity index of a graph $G$, denoted by $ABS(G)$, is a degree-based topological index defined as \[ABS(G)=\displaystyle\sum_{v_{i} v_{j}\in E(G)}\sqrt{1-\dfrac{2}{d_{i}+d_{j}}}.\] The topological matrix associated with $ABS(G)$ is called the atom-bond sum-connectivity matrix of $G$ and is denoted by $\mathcal{ABS}(G)$. Its $(i,j)$-th entry is given by \[(\mathcal{ABS}(G))_{ij}=\left\{\begin{array}{cc}
	\sqrt{1-\dfrac{2}{d_{i}+d_{j}}},& \text{if}\,\, v_{i}v_{j}\in E(G)\\
	0, &otherwise
\end{array}.\right.\] 
 The $i$-th largest eigenvalue of $\mathcal{ABS}(G)$ is denoted by   $\eta_i(G)$. In particular, $\eta_1$ is 
the spectral radius of $\mathcal{ABS}(G)$, hereafter referred to as the $ABS$ spectral radius of $G$. The matrix $\mathcal{ABS}(G)$
was first introduced in~\cite{lin2024abs}, where the authors investigated the properties of the $ABS$ Estrada index, $\sum_{i=1}^{n}e^{\eta_{i}}$ and discussed its chemical significance. The eigenvalue properties of the $ABS$ matrix were further explored in \cite{shetty2024mathcal}.\\

Recently, extremal problems concerning the $ABS$ spectral radius of trees and unicyclic graphs have been studied. In \cite{lin2024abstrees}, Lin et al.  characterized trees on $n$ vertices with largest $ABS$ spectral radius and demonstrated the chemical importance of $ABS$ spectral radius. Moreover, they posed a general problem of characterizing graphs with maximum $ABS$ spectral radius in a given class of graphs. In \cite{shetty2024mathcal1}, the authors have identified unicyclic graphs on $n$ vertices that attains the maximum $ABS$ spectral radius. Motivated by these works, in Section 2,  we determine the bipartite unicyclic graphs that attain the largest $ABS$ spectral radius. In section 3, we characterize the bicyclic graphs that attain the largest and the second largest $ABS$ spectral radii.
  \section{$ABS$ spectral radius of bipartite unicyclic graphs}\label{bug}
 In this section, we identify bipartite unicyclic graphs with largest $ABS$ spectral radius. The class of all bipartite unicyclic graphs of order $n$ is denoted as $\mathcal{U}_n^{b}$.  
 Let $\mathcal{H}_1$ be the unicyclic graph of order $n\ge 5$ obtained by attaching $n-4$ pendant vertices to one vertex of the cycle $C_4$. The graph $\mathcal{H}_2$ is obtained by attaching $n-5$ pendant vertices to one vertex of $C_{4}$, and then attaching a new pendant vertex to another vertex of $C_{4}$. See Fig. \ref{girth4}. 
\begin{figure}[H]
~~~~~~~~~~~~\includegraphics[width=0.75\textwidth]{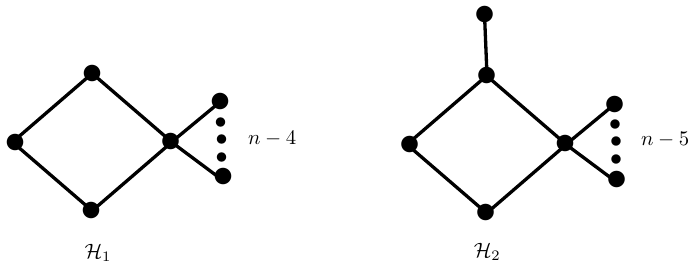}
\caption{The graphs $\mathcal{H}_1$ and $\mathcal{H}_2$.}
\label{girth4}
\end{figure}  
\noindent
 Let $M=(m_{ij})_{p\times q}$ and $N=(n_{ij})_{p\times q}$. We write $M\preceq N$ if $m_{ij}\le n_{ij}$ for all $i$, $j$. 
 The following lemmas are essential to prove our main results. 

\begin{lemma}{\rm\cite{horn2012matrix}}\label{b_1b_2}
	{Let $M, N$ be non-negative matrices of order $n$ with spectral radius $\rho(M)$ and $\rho(N)$, respectively. If $M\preceq N$, then $\rho(M)\le \rho(N)$. Further, if $M$ is irreducible and $M\neq N$, then $\rho(M)< \rho(N)$.}\end{lemma}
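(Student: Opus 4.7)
The plan is to split the lemma into two parts: the weak inequality $\rho(M)\le\rho(N)$, which uses only $0\preceq M\preceq N$, and the strict version, which additionally requires $M$ irreducible and $M\ne N$.

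For the weak part, I would first establish by induction on $k$ that $M^{k}\preceq N^{k}$ for every $k\ge 1$; the base case $k=1$ is the hypothesis, and the inductive step is immediate because multiplying entrywise-ordered non-negative matrices preserves the ordering. Choosing a monotone matrix norm such as $\|A\|=\sum_{i,j}|A_{ij}|$, the relation $M^{k}\preceq N^{k}$ yields $\|M^{k}\|\le\|N^{k}\|$ for every $k$. Gelfand's formula $\rho(A)=\lim_{k\to\infty}\|A^{k}\|^{1/k}$ then gives $\rho(M)\le\rho(N)$ after taking $k$-th roots and passing to the limit.

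For the strict part, the opening observation is that $N$ must also be irreducible: since $M\preceq N$ and both are non-negative, the directed graph of $M$ is a subgraph of that of $N$, so strong connectivity of the former implies strong connectivity of the latter. Now apply the Perron--Frobenius theorem separately to the irreducible matrices $M$ and $N$ to produce strictly positive vectors $x,y>0$ satisfying $Nx=\rho(N)\,x$ and $y^{\top}M=\rho(M)\,y^{\top}$. Expanding the scalar $y^{\top}Nx$ in two ways,
\[
\rho(N)\,(y^{\top}x)\;=\;y^{\top}Nx\;=\;y^{\top}Mx+y^{\top}(N-M)x\;=\;\rho(M)\,(y^{\top}x)+y^{\top}(N-M)x.
\]
Since $x$ and $y$ are strictly positive and $N-M$ is a nonzero entrywise non-negative matrix, the bilinear term $y^{\top}(N-M)x$ is strictly positive. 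Dividing through by the positive scalar $y^{\top}x$ yields $\rho(N)>\rho(M)$.

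The main obstacle is the subtlety of the strict case: a single Perron vector suffices only for the weak inequality, and one has to resist the temptation to compare $Mx$ and $Nx$ with a common eigenvector. The key move is to pair the right Perron vector of $N$ with the \emph{left} Perron vector of $M$ and exploit the strict positivity of both, which in turn forces us to verify in advance that $N$ inherits irreducibility from $M$.
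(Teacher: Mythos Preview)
Your proof is correct and self-contained. The paper itself does not prove this lemma; it merely cites it from Horn and Johnson's \emph{Matrix Analysis} and uses it as a black box, so there is no in-paper argument to compare against. Your approach---Gelfand's formula via $M^{k}\preceq N^{k}$ for the weak inequality, and the pairing of the right Perron eigenvector of $N$ with the left Perron eigenvector of $M$ for the strict inequality---is a standard and complete proof of this classical Perron--Frobenius comparison result.
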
  
\begin{lemma}{\rm\cite{nath2011second}}\label{girth41} Let $\mathcal{H}_2$ be the graph as depicted in Fig. \ref{girth4}. Then for any graph $G$ in the class  $\mathcal{U}_n^{b}\backslash\left\{\mathcal{H}_1\right\}$, $\lambda_1(G)\le \lambda_1(\mathcal{H}_2)$. Equality holds if and only if $G\cong \mathcal{H}_2$.
\end{lemma}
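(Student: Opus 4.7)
The plan is to take any graph $G \in \mathcal{U}_n^b \setminus \{\mathcal{H}_1\}$ that maximizes $\lambda_1$ over this class and, through a sequence of spectral-radius-increasing transformations that keep the graph inside $\mathcal{U}_n^b \setminus \{\mathcal{H}_1\}$, show that $G$ must be isomorphic to $\mathcal{H}_2$. Since $G$ is connected, Perron--Frobenius furnishes a positive unit eigenvector $x$ of $A(G)$ associated with $\lambda_1(G)$, and every strict increase below is justified by the Rayleigh-quotient inequality $\lambda_1(G') \ge x^{T} A(G') x$ applied to the same $x$ after each transformation.

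The first reduction is to girth $4$: if the unique cycle of $G$ has length $2k \ge 6$, let $u$ be a cycle-vertex maximizing $x_u$ and perform a standard local rewiring (delete a cycle-edge far from $u$ and reconnect so as to create a $C_4$ through $u$, preserving bipartiteness); the Rayleigh quotient of $x$ strictly increases, and the resulting graph is readily checked not to be $\mathcal{H}_1$. The second reduction is the classical pendant-tree straightening: each pendant tree $T_i$ hanging off a cycle-vertex $v_i$ may be replaced by the star $K_{1, |T_i| - 1}$ rooted at $v_i$, strictly increasing $\lambda_1$ unless $T_i$ was already a star, and preserving bipartiteness. We may therefore assume $G$ is $C_4 = v_1 v_2 v_3 v_4$ with $n_i$ pendants attached at $v_i$ and $\sum_{i=1}^{4} n_i = n - 4$.

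The core step is a Kelmans-type concentration. Whenever two cycle-vertices $v_a, v_b$ carry pendants, $n_a, n_b \ge 1$, and $x_{v_a} \ge x_{v_b}$, transferring all pendants from $v_b$ to $v_a$ yields a graph $G' \in \mathcal{U}_n^b$ with $\lambda_1(G') \ge \lambda_1(G)$, strict whenever $x_{v_a} > x_{v_b}$, via the identity $x^{T} A(G') x - x^{T} A(G) x = 2(x_{v_a} - x_{v_b}) \sum_p x_p$. If at least three of the $n_i$ are positive, the pair $(v_a, v_b)$ can be chosen so that a third loaded cycle-vertex survives the transfer, hence $G' \ne \mathcal{H}_1$; iterating reduces us to the case where pendants sit on exactly two cycle-vertices, $n_a \ge n_b \ge 1$ and $n_a + n_b = n - 4$. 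If $n_b \ge 2$, moving only a single pendant from $v_b$ to $v_a$ leaves $n_b - 1 \ge 1$ pendants on $v_b$ (so the resulting graph is still not $\mathcal{H}_1$) while strictly increasing $\lambda_1$. Hence $n_b = 1$ at the extremum.

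The remaining question---which I expect to be the main obstacle---is whether the lone light pendant attaches to a neighbor of $v_a$ on $C_4$ or to the vertex of $C_4$ opposite $v_a$. Both candidates share the same degree sequence, and the Kelmans shift does not distinguish them, so an explicit spectral computation is required. Using the natural equitable partition with cells consisting of the heavy vertex, its two cycle-neighbors, the opposite cycle-vertex, the $n-5$ heavy pendants, and the lone pendant, $\lambda_1$ reduces to the largest root of a $5 \times 5$ quotient-matrix characteristic polynomial, and a direct polynomial comparison (equivalently, a sign analysis of $x_{v_2} - x_{v_3}$ in the Perron vector of $\mathcal{H}_1$) selects the winning placement. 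Combined with Lemma \ref{b_1b_2} and the strictness in each previous step, this forces $G \cong \mathcal{H}_2$ and yields the equality clause.
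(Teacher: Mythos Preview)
The paper does not supply a proof of this lemma; it is quoted verbatim from \cite{nath2011second} and used as a black box in the proof of Lemma~\ref{unblemma1}. So there is nothing in the present paper to compare your argument against, and I can only comment on the sketch itself.

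The strategy is standard and broadly sound, but two of your reductions can push the graph out of the class $\mathcal{U}_n^b\setminus\{\mathcal{H}_1\}$, which breaks the contradiction scheme. After the girth reduction you still have a path (the remnant of the long cycle) hanging off $C_4$; when you then ``straighten'' the pendant tree at the heavy vertex, you may collapse everything onto a single cycle vertex and obtain precisely $\mathcal{H}_1$. Concretely, take $G=C_6$ with $n-6$ pendants at one vertex: your two reductions in succession produce $\mathcal{H}_1$, so no contradiction to extremality in $\mathcal{U}_n^b\setminus\{\mathcal{H}_1\}$ follows. The same hazard appears in the Kelmans step when only two cycle vertices are loaded. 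You also repeatedly claim \emph{strict} increase of $\lambda_1$ via Rayleigh, but your own formula $x^{T}A(G')x-x^{T}A(G)x=2(x_{v_a}-x_{v_b})\sum_p x_p$ gives zero when $x_{v_a}=x_{v_b}$, and symmetry (equal loads on opposite vertices of $C_4$, or $G=C_{2k}$ itself) can force exactly this; an extra irreducibility/eigenvector argument is needed and is not supplied.

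Finally, the step you flag as ``the main obstacle''---deciding whether the lone pendant sits on a neighbour of the heavy vertex or on the opposite vertex of $C_4$---is the actual content of the lemma, and you do not carry it out. Saying that ``a direct polynomial comparison selects the winning placement'' is not a proof; both candidates share the same degree sequence and the same quotient-matrix size, so the comparison is genuinely a computation that must be done (and, incidentally, it is what pins down the specific $\mathcal{H}_2$ depicted in Fig.~\ref{girth4}). Without it, and without repairing the class-escape issue above, the sketch does not establish the lemma.
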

\begin{lemma}\label{unblemma1}
If $G\in \mathcal{U}_n^{b}\backslash\left\{{\mathcal{H}_1}\right\}$, then $\eta_1(G)<\dfrac{n-2}{\sqrt{n}}$ for all $n\ge 10$. 
\end{lemma}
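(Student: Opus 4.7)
My plan is to reduce the bound on the ABS spectral radius to one on the adjacency spectral radius by exploiting bipartiteness, and then to invoke Lemma \ref{girth41} together with a direct estimate of $\lambda_1(\mathcal{H}_2)$. For any bipartite graph $G$ on $n$ vertices with bipartition $A\cup B$, an edge $v_iv_j$ has its endpoints in opposite parts, so $d_i\le |B|$ and $d_j\le |A|$, whence $d_i+d_j\le n$. Consequently every nonzero entry of $\mathcal{ABS}(G)$ satisfies $\sqrt{1-\tfrac{2}{d_i+d_j}}\le \sqrt{\tfrac{n-2}{n}}$, yielding the entrywise inequality $\mathcal{ABS}(G)\preceq \sqrt{\tfrac{n-2}{n}}\,A(G)$. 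Applying Lemma \ref{b_1b_2} then gives $\eta_1(G)\le \sqrt{\tfrac{n-2}{n}}\,\lambda_1(G)$.

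Next, for $G\in \mathcal{U}_n^{b}\setminus\{\mathcal{H}_1\}$, Lemma \ref{girth41} provides $\lambda_1(G)\le \lambda_1(\mathcal{H}_2)$. Combining these two observations, it suffices to prove the sharper claim $\lambda_1(\mathcal{H}_2)<\sqrt{n-2}$ for $n\ge 10$, because this would imply
\[
\eta_1(G)\le \sqrt{\tfrac{n-2}{n}}\,\lambda_1(\mathcal{H}_2)<\sqrt{\tfrac{n-2}{n}}\cdot \sqrt{n-2}=\frac{n-2}{\sqrt{n}}.
\]

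The main obstacle is therefore this adjacency-spectral estimate for $\mathcal{H}_2$. I would handle it via an equitable partition of $\mathcal{H}_2$: each cycle vertex in its own cell, together with one cell for the extra pendant and one cell for the $n-5$ twin pendants at the high-degree vertex. Writing down the characteristic polynomial of the resulting small quotient matrix and exploiting the bipartite symmetry of the spectrum, the problem reduces to studying a low-degree polynomial $f(y)$ in $y=\lambda^{2}$ whose largest real root equals $\lambda_1(\mathcal{H}_2)^{2}$. A short computation will show that $f(n-2)>0$ and $f'(n-2)>0$ for all $n\ge 10$; since $f$ has positive leading coefficient, this forces the largest real root of $f$ to lie strictly below $n-2$, which closes the argument.
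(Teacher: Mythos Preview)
Your approach is essentially the paper's: use the bipartite degree-sum bound $d_i+d_j\le n$ to get $\mathcal{ABS}(G)\preceq\sqrt{(n-2)/n}\,A(G)$, apply Lemmas~\ref{b_1b_2} and~\ref{girth41}, and reduce to $\lambda_1(\mathcal{H}_2)<\sqrt{n-2}$ via the characteristic polynomial (the paper's $\zeta(x)=x^6-nx^4+(3n-13)x^2+5-n$ is exactly your $f(y)$ after the substitution $y=x^2$). One small correction to your final step: $f(n-2)>0$ and $f'(n-2)>0$ alone do not force the largest root of a monic cubic below $n-2$ (the point could lie between the smallest two roots, before the local maximum); you need $f'(y)>0$ for all $y\ge n-2$, which does hold here since $n-2$ lies beyond the inflection point $y=n/3$, and the paper handles this by directly verifying $\zeta'(x)>0$ on $[\sqrt{n-2},\infty)$.
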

\begin{proof}
Since $d_i+d_j\le n$, for every edge $v_iv_j\in E(G)$,  $(\mathcal{ABS}(G))_{ij}\le \sqrt{\dfrac{n-2}{n}}$.  Therefore, $\mathcal{ABS}(G)\preceq \sqrt{\dfrac{n-2}{n}} A(G)$, and so by Lemma \ref{b_1b_2},  $\eta_1(G)< \sqrt{\dfrac{n-2}{n}}\lambda_1(G)$. Consequently, by Lemma \ref{girth41}, $\eta_{1}(G)<\sqrt{\dfrac{n-2}{n}}\lambda_{1}(\mathcal{H}_2)$.\\[2mm]
  Claim: $\lambda_1(\mathcal{H}_2)<\sqrt{n-2}$. The adjacency matrix of $\mathcal{H}_2$ is as follows.  
   $$ \begin{bmatrix}
	0 &1&0&1&0&1&\ldots&1\\
	1&0&1&0&0&0&\ldots&0\\
    0&1&0&1&0&0&\ldots&0\\
	1&0&1&0&1&0&\ldots& 0\\
    0&0&0&1&0&0&\ldots&0\\
    1&0&0&0&0&0&\ldots& 0\\
	\vdots&&&&&\vdots&\ddots&\vdots\\
    1&0&0&0&0&0&\ldots& 0
\end{bmatrix}.$$  
Therefore, the characteristic polynomial of $A(\mathcal{H}_2)$ is $x^{n-6}\zeta(x)$, where $\zeta(x):=x^6-nx^4+(3n-13)x^2+5-n$. It is easy to verify that, $\zeta\left(\sqrt{n-2}\right)>0$ for all $n\ge 10$. Let $\zeta_1(x)=\zeta\left(x+\sqrt{n-2}\right)$. Then $\zeta_1'(x)=2(3n-13)\left(x+\sqrt{n-2}\right)-4n\left(x+\sqrt{n-2}\right)^3+6\left(x+\sqrt{n-2}\right)^5>0$ for all $x\ge 0$. Thus, $\zeta(x)$ is increasing for $x\ge \sqrt{n-2}$. Since $\zeta\left(\sqrt{n-2}\right)>0$ for all $n\ge 10$, it follows that $\zeta(x)$ has no root in the interval $\left[\sqrt{n-2},\infty\right)$. Proving that $\lambda_1(\mathcal{H}_2)<\sqrt{n-2}$. Hence
 $\eta_1(G)<\dfrac{n-2}{\sqrt{n}}$.
\end{proof}
The following result gives the spectral radius of $\mathcal{ABS}(\mathcal{H}_{1})$. The proof is omitted, since the roots of the characteristic polynomial of  $\mathcal{ABS}(\mathcal{H}_{1})$ can be listed explicitly. 
\begin{lemma}\label{un4lemma2}
For $n\ge 5$,
\[\eta_1^2(\mathcal{H}_1)=\dfrac{n^3-4n^2+5n+4+\sqrt{n^6-12n^5+58n^4-108n^3+41n^2+40n+16}}{2n(n-1)}.\]
\end{lemma}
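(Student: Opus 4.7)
The plan is to exploit the automorphism structure of $\mathcal{H}_1$ to reduce the spectral radius computation for the $n\times n$ matrix $\mathcal{ABS}(\mathcal{H}_1)$ to that of a $4\times 4$ quotient matrix, whose characteristic polynomial will turn out to be biquadratic in $x$ and hence solvable in closed form by the quadratic formula.

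First I would label $v_1$ as the vertex of $C_4$ that carries the $n-4$ pendants, $v_2$ and $v_4$ as its two neighbours on $C_4$, $v_3$ as the vertex of $C_4$ opposite $v_1$, and $v_5,\dots,v_n$ as the pendants. The degree sequence is $d_1=n-2$, $d_2=d_3=d_4=2$, and $d_j=1$ for $j\ge 5$, so the nonzero entries of $\mathcal{ABS}(\mathcal{H}_1)$ take only three distinct values: $a:=\sqrt{(n-2)/n}$ on the two cycle edges incident with $v_1$, $b:=\sqrt{1/2}$ on the two cycle edges of $C_4$ not incident with $v_1$, and $c:=\sqrt{(n-3)/(n-1)}$ on each of the $n-4$ pendant edges.

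Next, the partition $\pi=\{\{v_1\},\{v_2,v_4\},\{v_3\},\{v_5,\dots,v_n\}\}$ is equitable for $\mathcal{ABS}(\mathcal{H}_1)$, so by Perron-Frobenius its Perron eigenvalue equals that of the $4\times 4$ quotient matrix
\[
B \;=\; \begin{pmatrix} 0 & 2a & 0 & (n-4)c \\ a & 0 & b & 0 \\ 0 & 2b & 0 & 0 \\ c & 0 & 0 & 0 \end{pmatrix}.
\]
Cofactor expansion of $\det(xI-B)$ along the last row (whose only nonzero entries are $-c$ and $x$) produces the biquadratic
\[
\det(xI-B)\;=\;x^4 \;-\; \bigl(2a^2+2b^2+(n-4)c^2\bigr)\,x^2 \;+\; 2(n-4)b^2 c^2.
\]

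Setting $y=x^2$ and substituting the values of $a^2,b^2,c^2$, a routine simplification over the common denominator $n(n-1)$ turns this into
\[
y^2 \;-\; \frac{n^3-4n^2+5n+4}{n(n-1)}\,y \;+\; \frac{(n-4)(n-3)}{n-1} \;=\; 0,
\]
and the quadratic formula then identifies $\eta_1^2(\mathcal{H}_1)$ with the larger root. The only real obstacle is algebraic bookkeeping: after clearing the denominator $n^2(n-1)^2$, one must verify that $(n^3-4n^2+5n+4)^2 - 4n(n-1)(n-4)(n-3)$ expands to exactly $n^6-12n^5+58n^4-108n^3+41n^2+40n+16$, which is a direct polynomial expansion. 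For completeness I would also remark that the remaining $n-4$ eigenvalues of $\mathcal{ABS}(\mathcal{H}_1)$ are all zero, one coming from the $v_2\leftrightarrow v_4$ antisymmetric vector and $n-5$ from pendant-supported vectors of zero coordinate sum, so that the largest root of the biquadratic is indeed the spectral radius $\eta_1(\mathcal{H}_1)$.
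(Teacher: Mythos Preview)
Your approach is correct and essentially what the paper has in mind: the paper actually omits the proof of this lemma, remarking only that the eigenvalues of $\mathcal{ABS}(\mathcal{H}_1)$ can be listed explicitly. Your reduction via the equitable partition $\{\{v_1\},\{v_2,v_4\},\{v_3\},\{v_5,\dots,v_n\}\}$ to the $4\times 4$ quotient $B$, followed by the biquadratic factorisation, is exactly the clean way to make that explicit computation, and the Perron--Frobenius justification that $\rho(B)=\eta_1(\mathcal{H}_1)$ is sound since $B$ is nonnegative and irreducible.

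One small slip in the bookkeeping: after clearing the denominator $n^2(n-1)^2$ the discriminant is
\[
(n^3-4n^2+5n+4)^2 \;-\; 4n^{2}(n-1)(n-4)(n-3),
\]
with a factor $4n^{2}(n-1)$, not $4n(n-1)$ as you wrote; it is this expression that expands to $n^6-12n^5+58n^4-108n^3+41n^2+40n+16$. With that correction the argument is complete.
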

\begin{theorem}
Among all bipartite unicyclic graphs of order $n\ge 5$, the graph $\mathcal{H}_1$  attains the maximum $ABS$ spectral radius.
\end{theorem}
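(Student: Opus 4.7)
The plan is to combine the two preceding lemmas: Lemma~\ref{unblemma1} already gives the upper bound $\eta_1(G)<\dfrac{n-2}{\sqrt{n}}$ for every competitor $G\in \mathcal{U}_n^{b}\setminus\{\mathcal{H}_1\}$ when $n\ge 10$, so to settle the large-$n$ regime it suffices to verify the matching lower bound $\eta_1(\mathcal{H}_1)\ge \dfrac{n-2}{\sqrt{n}}$. This turns the theorem into a concrete numerical comparison against the closed-form expression supplied by Lemma~\ref{un4lemma2}.

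First I would square both sides and use Lemma~\ref{un4lemma2} to rewrite the desired inequality $\eta_1^{2}(\mathcal{H}_1)\ge \dfrac{(n-2)^{2}}{n}$ as
\[
n^{3}-4n^{2}+5n+4+\sqrt{n^{6}-12n^{5}+58n^{4}-108n^{3}+41n^{2}+40n+16}\;\ge\; 2(n-1)(n-2)^{2}.
\]
After isolating the square root and squaring (which is legal once one checks that $n^{3}-6n^{2}+11n-12>0$ for $n\ge 5$), this reduces to the polynomial inequality
\[
48n^{3}-224n^{2}+304n-128\ge 0,
\]
equivalently $3n^{3}-14n^{2}+19n-8\ge 0$, which is straightforward to verify for all $n\ge 5$ (for instance by exhibiting a positive value at $n=5$ and checking that the derivative is positive throughout $[5,\infty)$). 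Combining this with Lemma~\ref{unblemma1} closes the case $n\ge 10$.

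It remains to dispose of the small orders $5\le n\le 9$, where Lemma~\ref{unblemma1} is silent. For these finitely many values I would enumerate the elements of $\mathcal{U}_n^{b}$: the bipartite unicyclic graphs on $n$ vertices are obtained by attaching rooted forests (with appropriate bipartition constraints) to the vertices of either $C_4$, $C_6$ or $C_8$. For each such graph I would either compute $\eta_1$ directly from its $\mathcal{ABS}$ matrix and compare numerically with the explicit value from Lemma~\ref{un4lemma2}, or use Lemma~\ref{b_1b_2} against a suitable quotient matrix to eliminate most candidates in batches (for example, any graph containing a cycle of length $\ge 6$ can be compared against the star-like graph $\mathcal{H}_1$ via its adjacency bound as in the proof of Lemma~\ref{unblemma1}, after checking that the constant $\sqrt{(n-2)/n}\,\lambda_1$ is smaller for these graphs).

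The main obstacle I anticipate is not the $n\ge 10$ regime — the polynomial reduction is mechanical — but rather the small-$n$ bookkeeping, since Lemma~\ref{unblemma1} was proved only for $n\ge 10$ and a crude adjacency-based upper bound leaks for small $n$. Writing the argument carefully will likely require either a short table of explicit spectral radii for $n\in\{5,6,7,8,9\}$ or a sharper version of the argument in Lemma~\ref{unblemma1} that exploits the actual degree sum $d_i+d_j$ on the non-pendant edges (which is considerably smaller than $n$) rather than the uniform bound $d_i+d_j\le n$.
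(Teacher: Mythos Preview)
Your approach is essentially identical to the paper's: for $n\ge 10$ you invoke Lemma~\ref{unblemma1} on competitors and then verify $\eta_1(\mathcal{H}_1)>\dfrac{n-2}{\sqrt n}$ via the closed form in Lemma~\ref{un4lemma2}, while the finitely many cases $5\le n\le 9$ are handled by direct computation (the paper simply cites Sage). Your polynomial $3n^{3}-14n^{2}+19n-8$ in fact factors as $(n-1)^{2}(3n-8)$, which is exactly how the paper presents it, so the verification for $n\ge 5$ is immediate and your squaring step is justified since $n^{3}-6n^{2}+11n-12=(n-4)(n^{2}-2n+3)>0$ for $n\ge 5$.
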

\begin{proof}
For $5\le n\le 9$, it can be verified using Sage \cite{Sage} that $\mathcal{H}_1$ has the maximum $ABS$ spectral radius.
Let $n\ge 10$. By Lemma~\ref{unblemma1}, for any $G\in \mathcal{U}_n^{b}\backslash\left\{\mathcal{H}_1\right\}$, $\eta_1(G)<\dfrac{n-2}{\sqrt{n}}$. From Lemma \ref{un4lemma2},
$\eta_{1}^{2}(\mathcal{H}_{1})-\dfrac{(n-2)^2}{n}=\dfrac{1}{2n(n-1)}\left(\sqrt{A}-(n-4)(n^2-2n+3)\right),$ where $A=n^6-12n^5+58n^4-108n^3+41n^2+40n+16$. Observe that $A-(n-4)^{2}(n^{2}-2n+3)^2=16(n-1)^2(3n-8)>0$ for $n\ge 5$,  and  so $\sqrt{A}> (n-4)(n^{2}-2n+3)$. Hence  $\eta_{1}(\mathcal{H}_{1})>\dfrac{n-2}{\sqrt{n}}$. This completes the proof of the theorem.
\end{proof}
\section{$ABS$ spectral radius of bicyclic graphs}\label{bg}
 The $\infty$-graph $B_{\infty}(p,l,q)$ \cite{guo2005spectral} is obtained from two cycles $C_{p}$ and $C_{q}$, and a path $P_{l}$ by identifying one end  of the path with a vertex of $C_{p}$ and the other end with a  vertex of $C_{q}$. The $\theta$-graph $B_{\theta}(p,l,q)$ is obtained from three vertex-disjoint paths $P_{p+2}, P_{l+2}$ and $P_{q+2}$, where $l, p, q \ge 0$ and at most one of them is $0$, by identifying the three initial vertices and terminal vertices of them, respectively. See Fig.  \ref{bicyclic1}.
 \begin{figure}[H]
\includegraphics[width=0.95\textwidth]{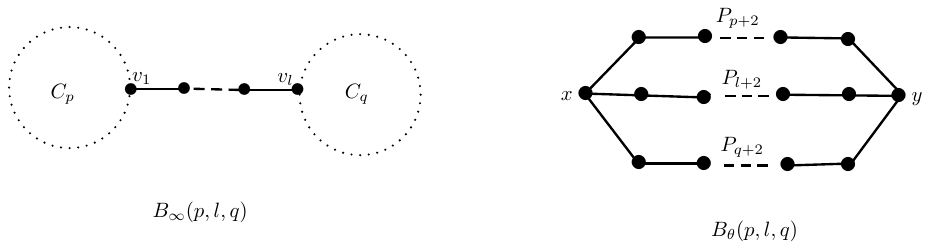}
\caption{Graphs $B_{\infty}(p,l,q)$ and $B_{\theta}(p,l,q)$. }
\label{bicyclic1}
\end{figure}
Let $\mathcal{B}(n)$ be the class of all bicyclic graphs of order $n$. For a bicyclic graph $G$, the unique bicyclic subgraph of $G$ that contains no pendant vertices is denoted by $\mathbb{U}(G)$. Note that $\mathbb{U}(G)$ is either $B_{\infty}(p,l,q)$ or $B_{\theta}(p,l,q)$. 
Denote 
\begin{center}
 $\mathcal{B}_{\infty}(n)=\left\{G\in \mathcal{B}_n| \mathbb{U}(G)=B_{\infty}(p,l,q)\right\}$ and $\mathcal{B}_{\theta}(n)=\left\{G\in \mathcal{B}(n)| \mathbb{U}(G)=B_{\theta}(p,l,q)\right\}$.
\end{center} Then $\mathcal{B}(n)=\mathcal{B}_{\infty}(n)\cup \mathcal{B}_{\theta}(n)$.
The following Kelmans operation is very much essential in this section.\\[2mm]
Let $u$ and $v$ be two vertices of the graph $G$. Define $\Omega_1=N(u)-N[v]$, $\Omega_2=N(v)-N[u]$, $\Omega_3=N(u)\cap N(v)$.  The Kelmans operation on two vertices $u$ and $v$ results in a new graph, denoted by $G_{uv^{+}}$. It is  obtained by deleting all the edges $uw$ such that $w \in \Omega_{1}$, and then adding edges $vw$ for all such $w \in \Omega_{1}$.\\[2mm] The following lemma proved in \cite{shetty2024mathcal1} gives a comparison between $ABS$ spectral radii of $G$ and $G_{uv^{+}}$, respectively.

\begin{lemma}{\rm\cite{shetty2024mathcal1}}\label{emptydelta}
 Let $G$ be a connected graph of order $n$. Suppose $uv\in E(G)$ such that $|\Omega_1|\ge 1$, $|\Omega_{2}|\ge 1$ and $\Omega_3=\emptyset$. Then $\eta_1(G)<\eta_1(G_{uv^{+}})$.
\end{lemma}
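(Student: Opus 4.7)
\textbf{Proof proposal for Lemma~\ref{emptydelta}.}
The plan is to combine the Rayleigh characterization of the spectral radius with the Perron eigenvector of $\mathcal{ABS}(G)$. Let $\rho=\eta_1(G)$ and let $\mathbf{x}$ be a unit Perron eigenvector of $\mathcal{ABS}(G)$; since $G$ is connected and the edge-weights are strictly positive, $\mathcal{ABS}(G)$ is non-negative and irreducible, so $x_w>0$ for every vertex $w$.

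First I would catalogue exactly how the $\mathcal{ABS}$-entries change under $G\mapsto G_{uv^{+}}$. Because $\Omega_3=\emptyset$, the neighbourhoods are $N_G(u)=\Omega_1\cup\{v\}$ and $N_G(v)=\Omega_2\cup\{u\}$, so $d_G(u)+d_G(v)=|\Omega_1|+|\Omega_2|+2$. The Kelmans operation re-routes the edges to $\Omega_1$ from $u$ to $v$, giving $d_{G_{uv^{+}}}(u)=1$, $d_{G_{uv^{+}}}(v)=|\Omega_1|+|\Omega_2|+1$, and leaves every other degree unchanged. Two consequences follow: the weight on the edge $uv$ is identical in the two graphs (it depends only on $d(u)+d(v)$), and for each $w\in\Omega_1\cup\Omega_2$ the new weight on $vw$ is the common value $\sqrt{1-2/(|\Omega_1|+|\Omega_2|+1+d_w)}$, which strictly exceeds the old weight on $uw$ (if $w\in\Omega_1$) or on $vw$ (if $w\in\Omega_2$) because $d(v)$ has strictly increased.

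The core step is to choose a test vector $\mathbf{y}$ for the Rayleigh quotient on $G_{uv^{+}}$: set $\mathbf{y}=\mathbf{x}$ if $x_v\ge x_u$, and otherwise swap the $u$- and $v$-coordinates of $\mathbf{x}$; in either case $\|\mathbf{y}\|_2^2=1$. Expanding $\mathbf{y}^{T}\mathcal{ABS}(G_{uv^{+}})\mathbf{y}-\mathbf{x}^{T}\mathcal{ABS}(G)\mathbf{x}$, the contributions from the edge $uv$ and from every edge disjoint from $\{u,v\}$ vanish, so the difference reduces to a sum over $w\in\Omega_1\cup\Omega_2$ of terms of the form $(w'_{vw}\,\max\{x_u,x_v\}-w_{\mathrm{old}}\,a_w)\,x_w$ with $a_w\in\{x_u,x_v\}$ and $w'_{vw}>w_{\mathrm{old}}$. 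A short case check on the sign of $x_u-x_v$ and on whether $w\in\Omega_1$ or $w\in\Omega_2$ shows each summand is strictly positive, using $x_w>0$. Hence $\mathbf{y}^{T}\mathcal{ABS}(G_{uv^{+}})\mathbf{y}>\mathbf{x}^{T}\mathcal{ABS}(G)\mathbf{x}=\rho$, and the Rayleigh bound $\eta_1(G_{uv^{+}})\ge\mathbf{y}^{T}\mathcal{ABS}(G_{uv^{+}})\mathbf{y}$ yields the desired strict inequality.

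The step I expect to require the most care is the sub-case $x_u>x_v$ with $w\in\Omega_2$: there the summand becomes $(w'_{vw}x_u-w_{vw}x_v)x_w$, and strict positivity needs both $w'_{vw}>w_{vw}$ \emph{and} $x_u>x_v$ to cooperate. This is where the hypothesis $\Omega_3=\emptyset$ is indispensable: without common neighbours to cancel, the operation strictly adds $|\Omega_1|\ge 1$ new neighbours to $v$, so $d_{G_{uv^{+}}}(v)>d_G(v)$ and the weight factor does strictly increase. Once this point is secured, strict positivity across every remaining summand follows immediately from $x_w>0$ and the weight inequalities already recorded.
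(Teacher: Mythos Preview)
The paper does not prove this lemma; it is quoted verbatim from \cite{shetty2024mathcal1}, so there is no in-paper argument to compare against. Your Perron-vector/Rayleigh-quotient approach is exactly the standard one for such Kelmans-type shifting lemmas, and the argument you outline is correct: the degree bookkeeping is right (in particular $d_G(u)+d_G(v)=d_{G_{uv^+}}(u)+d_{G_{uv^+}}(v)$ so the $uv$-weight is preserved, every other vertex keeps its degree, and the new $vw$-weight strictly exceeds the old $uw$- or $vw$-weight for each $w\in\Omega_1\cup\Omega_2$), and the swap $y_u\leftrightarrow y_v$ when $x_u>x_v$ makes every summand in the Rayleigh-quotient difference strictly positive since $x_w>0$.

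Two minor expository points. First, for $w\in\Omega_1$ the weight comparison is between the old $d_G(u)=|\Omega_1|+1$ and the new $d_{G_{uv^+}}(v)=|\Omega_1|+|\Omega_2|+1$, so the strict increase there uses $|\Omega_2|\ge 1$ rather than ``$d(v)$ has increased''; your conclusion is still correct. Second, your closing remark undersells the role of $\Omega_3=\emptyset$: had $\Omega_3\neq\emptyset$, the surviving edges $uw$ with $w\in\Omega_3$ would carry \emph{smaller} weights after the operation (since $d_{G_{uv^+}}(u)<d_G(u)$), and after the coordinate swap these terms would become genuinely negative, so the hypothesis is doing more work than just guaranteeing $d_{G_{uv^+}}(v)>d_G(v)$. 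Neither point affects the validity of the proof under the stated hypotheses.
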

Let $\Gamma_\infty(n) \subseteq \mathcal{B}_\infty(n)$ denote the class of bicyclic graphs of order $n$ with girth $3$, diameter at most $4$, and exactly $n - 5$ pendant vertices. Also, let $\Gamma_{\theta}(n) \subseteq \mathcal{B}_{\theta}(n)$ denote the class of bicyclic graphs of order $n$ with girth $3$, diameter at most $4$, and exactly $n - 4$ pendant vertices. See Fig. \ref{biafkel}.
\begin{figure}[H]
~~~~~~~~~~~~\includegraphics[width=0.75\textwidth]{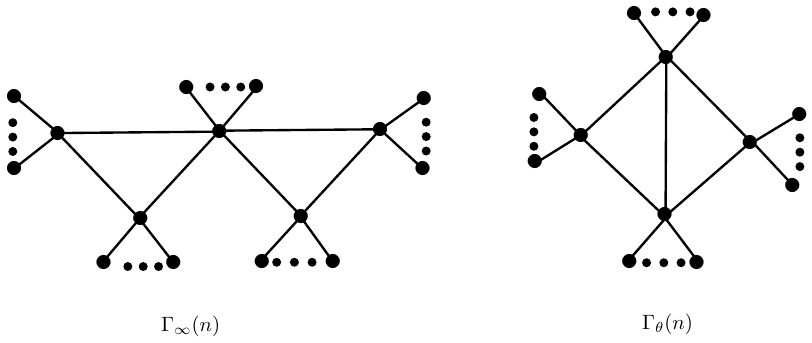}
\caption{Graph classes $\Gamma_{\infty}(n)$ and $\Gamma_{\theta}(n)$.}
\label{biafkel}
\end{figure}
\begin{theorem}\label{theorem}
Let $G\in \mathcal{B}_{\infty}(n)$, where $n\ge 5$ with the largest $ABS$ spectral radius. Then $G\in \Gamma_\infty(n)$. 
\end{theorem}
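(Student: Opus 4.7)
My plan is to take an extremal $G \in \mathcal{B}_\infty(n)$ and apply Lemma \ref{emptydelta} iteratively in three successive stages; each stage exhibits a Kelmans move that strictly increases $\eta_1$ while keeping the graph inside $\mathcal{B}_\infty(n)$, so extremality rules out the configuration the move was applied to. Write $\mathbb{U}(G) = B_\infty(p,l,q)$. Stage 1 shows that every vertex of $G$ outside $\mathbb{U}(G)$ is a pendant: if some $u_2 \in V(G) \setminus V(\mathbb{U}(G))$ had degree $\ge 2$, then $u_2$ has a neighbor $u_1$ on the unique path from $u_2$ to $\mathbb{U}(G)$ and another neighbor $u_3 \ne u_1$. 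Since the component of $G - E(\mathbb{U}(G))$ containing $u_2$ is a tree, $N(u_1) \cap N(u_2) = \emptyset$, a neighbor of $u_1$ closer to $\mathbb{U}(G)$ sits in $\Omega_1$, and $u_3 \in \Omega_2$; Lemma \ref{emptydelta} applied to the edge $u_1 u_2$ produces a graph in $\mathcal{B}_\infty(n)$ with strictly larger $\eta_1$, contradicting extremality.

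Stage 2 shows each cycle of $\mathbb{U}(G)$ is a triangle. If $p \ge 4$, label $C_p = x_1 x_2 \cdots x_p x_1$ with $x_1$ the vertex shared with the connecting path and apply the Kelmans move to the edge $x_2 x_3$: Stage 1 forces the only non-cycle neighbors of $x_2$ and $x_3$ to be their own pendants (each unique to a single vertex), and $p \ge 4$ prevents any common cycle-neighbor, so $\Omega_3 = \emptyset$ while $x_1 \in \Omega_1$ and $x_4 \in \Omega_2$. The resulting graph has $\mathbb{U} = B_\infty(p-1, l, q) \in \mathcal{B}_\infty(n)$, contradicting extremality; hence $p = 3$, and by symmetry $q = 3$. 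Stage 3 shows $l = 1$: if $l \ge 2$, the connecting path contains an edge $uw$ with $u \in V(C_p)$ and a parallel check confirms the hypotheses of Lemma \ref{emptydelta} (the two cycle-neighbors of $u$ lie in $\Omega_1$, the next path vertex lies in $\Omega_2$, and $\Omega_3 = \emptyset$ by Stage 1 together with disjointness of the cycles). The Kelmans step strictly shortens the path, and iterating forces $l = 1$.

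Combining the three stages, $\mathbb{U}(G) = B_\infty(3,1,3)$ is the bowtie on five vertices (two triangles sharing a single vertex), and the remaining $n - 5$ vertices of $G$ are pendants attached to it; since every pendant is within distance $2$ of the shared vertex, the diameter of $G$ is at most $4$, so $G \in \Gamma_\infty(n)$. The main obstacle I anticipate is the bookkeeping at each stage: verifying $\Omega_3 = \emptyset$ together with non-emptiness of $\Omega_1$ and $\Omega_2$ for the chosen edge, and confirming that the Kelmans-produced graph still lies in $\mathcal{B}_\infty(n)$ so that extremality within this class yields the contradiction. This verification leans heavily on Stage 1 having first eliminated long pendant branches, which could otherwise create unexpected common neighbors between cycle or path vertices of $\mathbb{U}(G)$.
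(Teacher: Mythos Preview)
Your proposal is correct and follows essentially the same approach as the paper: repeatedly apply the Kelmans operation via Lemma~\ref{emptydelta} to contradict extremality unless $G\in\Gamma_\infty(n)$. The paper compresses your Stages~1 and~3 into a single step (``every non-pendant edge off the two cycles'' forces both the attached trees to be pendants and $l=1$), and is terser about checking $\Omega_3=\emptyset$, $|\Omega_1|,|\Omega_2|\ge 1$, and closure under $\mathcal{B}_\infty(n)$; your three-stage version makes these verifications explicit but adds nothing new.
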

\begin{proof}
Let $G\in \mathcal{B}_{\infty}(n)$ with maximum $ABS$ spectral radius and let $\mathbb{U}(G)=B_{\infty}(p,l,q)$. Suppose that $G$ has a non-pendant edge $uv$ which does not lie on either of the two  cycles $C_{p}$ and $C_{q}$. By applying Kelmans operation on the edge $uv$, we obtain the graph $G_{uv^{+}}$. Now, Lemma \ref{emptydelta} ensures that $\eta_{1}(G)<\eta_{1}(G_{uv^{+}})$, a contradiction. Hence, every edge that does not lie on $C_{p}$ or $C_{q}$ must be pendant. Furthermore, if either  $C_{p}$ or $C_{q}$ is not a triangle, applying the operation to an edge 
$uv$ on such a non-triangular cycle results in the graph $G_{uv^{+}}$ and Lemma \ref{emptydelta} again implies  $\eta_{1}(G)<\eta_{1}(G_{uv^{+}})$, a contradiction. Therefore,  $G\in \Gamma_{\infty}(n)$.  
\end{proof}
Our next result can be proved in a manner similar to Theorem \ref{theorem}. 
\begin{theorem}
Let $G\in \mathcal{B}_{\theta}(n)$, where $n\ge 4$ with the largest $ABS$ spectral radius. Then $G\in \Gamma_\theta(n)$.
\end{theorem}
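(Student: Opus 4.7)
The plan is to follow the two-step Kelmans reduction used in the proof of Theorem~\ref{theorem}. Let $G \in \mathcal{B}_\theta(n)$ realize the maximum $ABS$ spectral radius and write $\mathbb{U}(G) = B_\theta(p,l,q)$, the $\theta$-graph formed by three internally disjoint paths joining a common pair of branch vertices $x,y$.

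First I would show that every non-pendant edge of $G$ lies on $\mathbb{U}(G)$. If $uv$ were a non-pendant edge outside $\mathbb{U}(G)$, then $uv$ is a bridge in a tree attached to $\mathbb{U}(G)$, so $uv$ belongs to no triangle, giving $\Omega_3 = \emptyset$; the non-pendant property of $uv$ forces $|\Omega_1|, |\Omega_2| \ge 1$. Lemma~\ref{emptydelta} then delivers $G_{uv^+}$ with strictly larger $ABS$ spectral radius, and $G_{uv^+}$ remains in $\mathcal{B}_\theta(n)$ since Kelmans on a bridge leaves the $\theta$-core intact. This contradicts maximality, so every vertex outside $\mathbb{U}(G)$ is pendant.

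Next I would show that $\mathbb{U}(G) \cong K_4 - e$, equivalently that every edge of $\mathbb{U}(G)$ lies in a triangle. Suppose $uv \in E(\mathbb{U}(G))$ is in no triangle. Then $\Omega_3 = \emptyset$, while $u$ and $v$ each have degree at least $2$ in $\mathbb{U}(G)$; combined with $\Omega_3 = \emptyset$, this yields $|\Omega_1|, |\Omega_2| \ge 1$. Lemma~\ref{emptydelta} again gives a strictly larger $ABS$ spectral radius, and a short case analysis shows that $G_{uv^+}$ is still bicyclic of $\theta$-type: Kelmans either shortens one of the three paths of $\mathbb{U}(G)$ by one (replacing an internal vertex by a pendant) or, when applied to $\mathbb{U}(G) = K_{2,3} = B_\theta(1,1,1)$, collapses the core directly to $K_4 - e$. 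Either way we remain in $\mathcal{B}_\theta(n)$ and contradict maximality. The only $\theta$-graph in which every edge lies in a triangle is $B_\theta(0,1,1) \cong K_4 - e$, so $\mathbb{U}(G) \cong K_4 - e$.

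Combining the two steps, $G$ is obtained from $K_4 - e$ by attaching $n-4$ pendants to its four vertices. This graph has girth $3$, exactly $n-4$ pendant vertices, and (since $K_4 - e$ has diameter $2$) diameter at most $2 + 1 + 1 = 4$, so $G \in \Gamma_\theta(n)$. The main obstacle is the second step: unlike in Theorem~\ref{theorem}, where the two cycles of $B_\infty(p,l,q)$ are edge-disjoint and the Kelmans move on a cycle edge is transparent, the three paths of $B_\theta(p,l,q)$ share their branch vertices, so one must carefully verify the $\Omega_3 = \emptyset$ condition edge-by-edge and confirm that $G_{uv^+}$ does not drift into $\mathcal{B}_\infty(n)$.
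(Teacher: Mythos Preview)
Your proposal is correct and follows essentially the same approach as the paper: the paper's proof of this theorem consists of the single sentence ``can be proved in a manner similar to Theorem~\ref{theorem},'' and your two-step Kelmans reduction is precisely that argument spelled out. Your added caution about the second step (verifying $\Omega_3=\emptyset$ and checking that $G_{uv^+}$ stays in $\mathcal{B}_\theta(n)$) is well placed and goes beyond what the paper makes explicit, but the verification does go through as you indicate.
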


We denote the class of bicyclic graphs of order $n$ and diameter $d$ by $\mathcal{B}_n^d$. Let $P_{d+1}(i)$, where $d\ge 2$ and $2\le i\le d$ be the graph of order $n$ obtained from the path  $P_{d+1}:z_1z_2\dots z_{d+1}$ by joining the vertex $z_{i}$ with each of the isolated vertices $z_{d+2},\dots, z_n$. Denote by $P_{d+1}^\ast(i)$ the graph obtained from $P_{d+1}(i)$ by adding the edges $z_{i-1}z_n$ and $z_{i+1}z_n$, The graph $P^{\ast\ast}_{d+1}(i)$  is obtained from $P_{d+1}(i)$ by adding the edges $z_{i-1}z_{n-1}$  and $z_{i-1}z_n$. See Fig. \ref{theta} and \ref{p5p4}.
\begin{figure}[H]
~~~~~~~~~~~~~~~\includegraphics[width=0.85\textwidth]{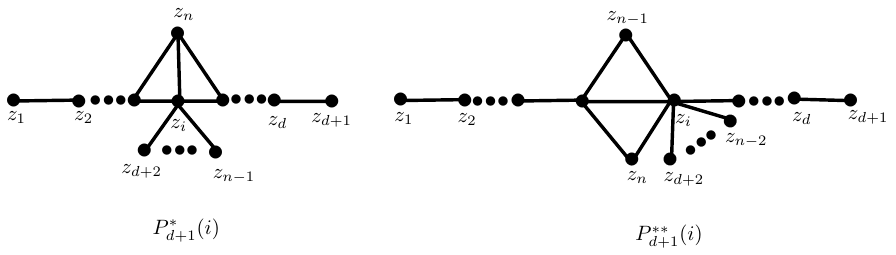}
\caption{The bicyclic graphs $P_{d+1}^*(i)$ and $P^{**}_{d+1}(i)$.}
\label{theta}
\end{figure}
The following lemma proved in \cite{guo2007spectral} is useful to prove our next result.
\begin{lemma}{\rm\cite{guo2007spectral}}\label{d3d4}
Let $n\ge d+4$ and $G\in \mathcal{B}_{n}^d$. If $d\ge 4$, then $\lambda_1(G)\le \lambda_1\left(P^{\ast}_{d+1}\left(\lfloor\dfrac{d+2}{2}\rfloor\right)\right)$ with equality if and only if $G=P^{\ast}_{d+1}\left(\lfloor \dfrac{d+2}{2}\rfloor\right)$. Furthermore, if $d=3$, then $\lambda_1(G)\le \lambda_1(P^{\ast\ast}_4(3))$ with equality if and only if $G=P^{\ast\ast}_4(3)$.
\end{lemma}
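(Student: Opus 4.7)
The plan is to take a graph $G \in \mathcal{B}_n^d$ with maximum adjacency spectral radius $\lambda_1(G)$ and successively reshape it while monitoring $\lambda_1$ via the Perron vector $x$ (strictly positive by Perron--Frobenius). The main tool is the standard edge-shift lemma: if $u,w \in V(G)$ satisfy $x_u \le x_w$ and $T \subseteq N(u)\setminus(N[w]\cup\{w\})$, then deleting all edges $\{uz : z \in T\}$ and adding $\{wz : z \in T\}$ produces a graph $G'$ with $\lambda_1(G') \ge \lambda_1(G)$, strictly when $x_u < x_w$ and $T \ne \emptyset$. This is the classical adjacency-spectrum analogue of the Kelmans move used in Lemma~\ref{emptydelta}.

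Fix a diametral path $P: v_0 v_1 \cdots v_d$ of $G$. I would first argue that every vertex off $P$ must be pendant: if there were a non-pendant edge outside $P$, a Kelmans-style move would relocate it to a vertex of $P$ while preserving the diameter $d$ (the diametral path remains intact), strictly increasing $\lambda_1$ and contradicting extremality. Similarly, by shifting pendants along $P$ to a single internal vertex $v_i$ with $2 \le i \le d$, one may assume all the pendant vertices are attached at a common $v_i$, yielding a graph whose underlying structure is $P_{d+1}(i)$ plus two extra edges accounting for the bicyclic nature.

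Next I would reduce the bicyclic core $\mathbb{U}(G)$. The two extra edges beyond a spanning tree form two cycles; by further edge-shifts within any cycle of length $\ge 4$, one reduces these to two triangles. The diameter constraint then dictates whether the triangles share an edge or a vertex. For $d \ge 4$ the optimum turns out to be two triangles sharing the edge $v_i z_n$ (giving exactly $P^{\ast}_{d+1}(i)$); the index $i$ is then pinned at $\lfloor (d+2)/2 \rfloor$ by comparing $\lambda_1\bigl(P^{\ast}_{d+1}(i)\bigr)$ for admissible $i$ via a grafting/swap argument on the two ``arms'' $v_0\cdots v_{i-1}$ and $v_{i+1}\cdots v_d$ of the diametral path. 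For $d = 3$, the path $P_4$ is too short to accommodate two triangles sharing an edge at a midpoint without violating $\mathrm{diam}=3$, which forces the alternate configuration $P^{\ast\ast}_4(3)$ in which the two triangles share the path edge $z_2 z_3$.

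The main obstacle is pinning down the optimal index $i = \lfloor (d+2)/2 \rfloor$ in the $d \ge 4$ regime. I would handle this by taking the equitable partition of $P^{\ast}_{d+1}(i)$ induced by the pendant orbit and computing the resulting quotient matrix; the dependence of its Perron root on $i$ can then be analyzed by showing that moving the pendant attachment one step toward the center of $P_{d+1}$ strictly increases $\lambda_1$, via a direct Rayleigh-quotient comparison using a suitably modified Perron vector. The separate handling of $d=3$ reduces to a finite comparison among a few candidate characteristic polynomials, and the boundary value $n=d+4$ (the smallest order allowed) should be checked by direct computation to seed the reduction.
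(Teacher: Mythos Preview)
This lemma is not proved in the paper at all: it is quoted verbatim from \cite{guo2007spectral} (note the citation immediately after \texttt{\textbackslash begin\{lemma\}} and the sentence ``The following lemma proved in \cite{guo2007spectral} is useful to prove our next result'' introducing it). There is therefore no in-paper proof to compare your proposal against.

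That said, your outline is broadly the right shape for how such results are obtained in the source literature, but one step is genuinely unsafe as written. You assert that each Kelmans-type edge shift ``preserves the diameter $d$ (the diametral path remains intact)''. Keeping the path $v_0v_1\cdots v_d$ intact guarantees only $\mathrm{diam}(G')\ge d$; it does \emph{not} prevent the diameter from increasing (if the shift disconnects a shortcut that some other pair of vertices relied on) nor from decreasing (once enough edges have been concentrated at a single hub, far vertices on $P$ may acquire shorter detours through the pendant cluster). Either failure ejects $G'$ from $\mathcal{B}_n^d$ and breaks the extremality argument. In Guo's original proof this is handled by restricting the allowed moves so that the diametral path is fixed \emph{and} no new shortcut is created, and by checking separately that the final centering step keeps $\mathrm{diam}=d$; your sketch would need the same care. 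The remaining ingredients you list (pendant consolidation, triangle reduction of the bicyclic core, and the centering of $i$ via a grafting comparison) are standard and match the flavour of the cited source.
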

\begin{figure}[H]
~~~~~~\includegraphics[width=0.85\textwidth]{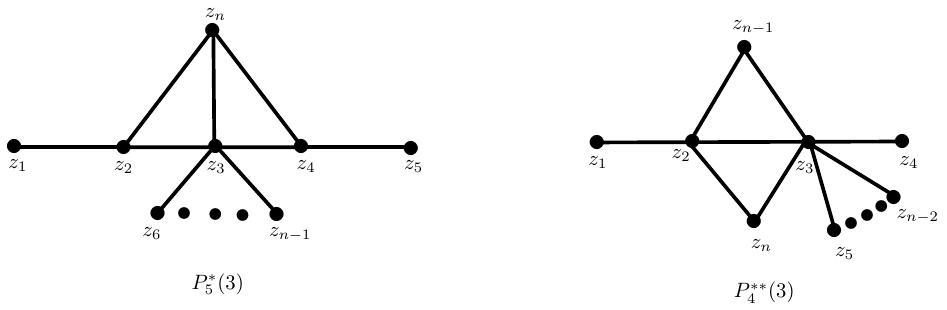}
\caption{ The bicyclic graphs $P_5^*(3)$ and $P_4^{**}(3)$.}
\label{p5p4}
\end{figure}
We now derive bounds for the $ABS$ spectral radius of graphs with a given diameter that belong to either $\Gamma_\infty(n)$ or $\Gamma_\theta(n)$.  
\begin{lemma}\label{bigirth4}
Let $G$ be a graph in the class $\Gamma_\infty(n)$ or $\Gamma_\theta(n)$. If diameter of $G$ is $4$, then 
 $\eta_1(G)<\sqrt{\dfrac{(n-2)(n-1)}{n}}$ for all $n\ge 16$.
\end{lemma}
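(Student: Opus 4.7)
The plan mirrors the strategy used in Lemma \ref{unblemma1}: first replace $\mathcal{ABS}(G)$ by a scalar multiple of $A(G)$, and then invoke the known extremal result for the adjacency spectral radius of diameter-$4$ bicyclic graphs.

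The first two steps are essentially free. Since $d_i + d_j \le n$ on every edge, each entry of $\mathcal{ABS}(G)$ is at most $\sqrt{(n-2)/n}$, so $\mathcal{ABS}(G)\preceq \sqrt{(n-2)/n}\,A(G)$, and Lemma \ref{b_1b_2} yields $\eta_1(G)\le \sqrt{(n-2)/n}\,\lambda_1(G)$. Next, since $G$ has diameter $4$ and $n\ge 16\ge d+4$, Lemma \ref{d3d4} applies (with $d=4$) and gives $\lambda_1(G)\le \lambda_1(P^\ast_5(3))$, independent of whether $\mathbb{U}(G)$ is an $\infty$- or $\theta$-graph, so both $\Gamma_\infty(n)$ and $\Gamma_\theta(n)$ are handled uniformly. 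Multiplying these two bounds reduces the lemma to the single claim
\[
\lambda_1(P^\ast_5(3))<\sqrt{n-1}\qquad (n\ge 16).
\]

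The main obstacle is precisely this claim, and I would prove it by exploiting the automorphism orbits of $P^\ast_5(3)$ to obtain a small equitable partition. The five orbits are: the two leaves of the underlying $P_5$, the two vertices of $P_5$ adjacent to them, the central vertex $z_3$, the extra vertex $z_n$ that forms the two triangles together with $z_3$, and the $n-6$ pendants attached to $z_3$. An explicit computation of the resulting $5\times 5$ quotient matrix shows that its characteristic polynomial factors as $x\,f(x)$ with
\[
f(x)=x^4-nx^2-4x+(3n-17).
\]
Since $A(P^\ast_5(3))$ is non-negative and connected, its spectral radius is positive and must be a root of $f$, so it suffices to show that $f$ has no root in $[\sqrt{n-1},\infty)$.

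For the latter, a direct substitution gives $f(\sqrt{n-1})=2(n-8)-4\sqrt{n-1}$, and this is positive for $n\ge 16$ because $(n-8)^2-4(n-1)=n^2-20n+68$ is positive on that range (its largest root is $10+4\sqrt{2}<16$). For monotonicity, one checks that $f'(\sqrt{n-1})=2(n-2)\sqrt{n-1}-4$ is positive for $n\ge 16$ and that $f''(x)=12x^2-2n$ is positive for $x\ge \sqrt{n-1}$; hence $f$ is strictly increasing on $[\sqrt{n-1},\infty)$ and cannot vanish there. Combining this with the reductions above delivers $\eta_1(G)<\sqrt{(n-2)(n-1)/n}$. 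The genuinely delicate part of the argument is the quotient-matrix computation; everything else is routine arithmetic and direct appeals to the lemmas already stated.
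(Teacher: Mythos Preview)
Your proof is correct and follows essentially the same route as the paper: bound $\mathcal{ABS}(G)$ by $\sqrt{(n-2)/n}\,A(G)$, apply Lemma~\ref{d3d4} to reduce to $P^\ast_5(3)$, extract the quartic factor $x^4-nx^2-4x+3n-17$, and show it has no root in $[\sqrt{n-1},\infty)$. The only cosmetic differences are that the paper writes out the full $n\times n$ adjacency matrix to obtain the characteristic polynomial $x^{n-6}(x^2-1)\varphi(x)$ whereas you pass through the $5\times 5$ quotient matrix of the orbit partition, and the paper checks monotonicity by shifting the variable while you use $f''>0$ together with $f'(\sqrt{n-1})>0$.
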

\begin{proof}
For any two vertices $v_{i}$ and $v_{j}$ of $G$, $d_{j}+d_{j}\le n$. Therefore, $\mathcal{ABS}(G)\preceq\sqrt{\dfrac{n-2}{n}}A(G)$, and by Lemma \ref{b_1b_2}, $\eta_{1}(G)<\sqrt{\dfrac{n-2}{n}}\lambda_{1}(G)$. Furthermore, by Lemma \ref{d3d4}, \begin{equation}\label{plemma}\eta_{1}(G)<\sqrt{\dfrac{n-2}{n}}\,\lambda_{1}(P^\ast_5(3))
\end{equation} for $n\ge 8$.\\[2mm] We now show that $\lambda_{1}(P^\ast_5(3))< \sqrt{n-1}$ for all $n\ge 16$. The adjacency matrix of $P^\ast_5(3)$ is as follows.\\[2mm]
$$ \mathcal{A}(P^\ast_5(3))=\begin{bmatrix}
	0&1&0&0&0&0&\dots&0&0\\
    1&0&1&0&0&0&\dots&0&1\\
    0&1&0&1&0&1&\dots&1&1\\
    0&0&1&0&1&0&\dots&0&1\\
    0&0&0&1&0&0&\dots&0&0\\
    0&0&1&0&0&0&\dots&0&0\\
    \vdots&&&&&&\ddots&&\vdots\\
    0&0&1&0&0&0&\dots&0&0\\
    0&1&1&1&0&0&\dots&0&0
\end{bmatrix}.$$
Therefore, the characteristic polynomial of $A(P^\ast_5(3))$ is $x^{n-6}(x^2-1)\varphi(x)$, where $\varphi(x)=x^4-nx^2-4x+3n-17.$ It is straightforward to verify that $\varphi(\sqrt{n-1})>0$ for all $n\ge 16$.\\ Let $n\ge 16$ and  $\varphi_1(x)=\varphi(x+\sqrt{n-1})$. Then $\varphi_1'(x)=\left( 12\,{x}^{2}+2\,n-4 \right) \sqrt {n-1}+4\,{x}^{3}+ \left( 10
\,n-12 \right) x-4
>0$ for all $x\ge 0$. Thus, $\varphi(x)$
is increasing for $x\ge\sqrt{n-1}$. So, $\varphi(x)\ge \varphi(\sqrt{n-1})>0$ for $x\ge\sqrt{n-1}$. This implies, $\varphi(x)$ has no root in the interval $[\sqrt{n-1},\infty)$.
This proves that for $n\ge 16$,  \begin{equation}\label{plemma1}
\lambda_1(P^\ast_5(3))<\sqrt{n-1}.	
\end{equation}   
Employing equation (\ref{plemma1}) in (\ref{plemma}), we get the desired result.
\end{proof}
\begin{lemma}\label{bigirth3}
Let $G$ be a graph  in the class  $\Gamma_\infty(n)$ or $\Gamma_\theta(n)$. If diameter of $G$ is $3$, then \begin{enumerate}[(i)]\item $\eta_1(G)<\dfrac{1}{10}\sqrt{\dfrac{(n-1)(100n-53)}{n+1}}$ for all $n\ge 21$.
\item $\eta_1(G)<\dfrac{n-1}{\sqrt{n+1}}$ for all $n\ge 34$.\end{enumerate}
\end{lemma}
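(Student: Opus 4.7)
The plan is to mirror the strategy used for Lemma \ref{bigirth4}, now invoking Lemma \ref{d3d4} in the $d=3$ case. Since $d_i+d_j\le n$ for every edge, $\mathcal{ABS}(G)\preceq\sqrt{(n-2)/n}\,A(G)$, so Lemma \ref{b_1b_2} gives $\eta_1(G)<\sqrt{(n-2)/n}\,\lambda_1(G)$, and Lemma \ref{d3d4} (with $d=3$) then yields $\lambda_1(G)\le\lambda_1(P_4^{\ast\ast}(3))$. The task therefore reduces to obtaining a sharp upper bound on $\lambda_1(P_4^{\ast\ast}(3))$.

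Next I would compute the characteristic polynomial of $A(P_4^{\ast\ast}(3))$ using the equitable partition into the cells $\{z_1\},\{z_2\},\{z_3\}$, the pendant twins $\{z_4,\dots,z_{n-2}\}$ at $z_3$, and the twins $\{z_{n-1},z_n\}$ (both with neighbourhood $\{z_2,z_3\}$). A cofactor expansion of the resulting quotient matrix yields $\det(xI-A(P_4^{\ast\ast}(3)))=x^{n-4}q(x)$ with $q(x)=x^4-(n+1)x^2-4x+3n-13$, so $\lambda_1(P_4^{\ast\ast}(3))$ is the largest root of $q$. For part (ii), one evaluates $q(\sqrt{n-1})=n-11-4\sqrt{n-1}$, which is positive iff $(n-11)^2>16(n-1)$, i.e., $n\ge 34$; the checks $q'(\sqrt{n-1})=(2n-6)\sqrt{n-1}-4>0$ (for $n\ge 5$) and $q''(x)>0$ for $x\ge\sqrt{n-1}$ give monotonicity, so $\lambda_1(P_4^{\ast\ast}(3))<\sqrt{n-1}$ for $n\ge 34$, and hence $\eta_1(G)<\sqrt{(n-1)(n-2)/n}$. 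The algebraic comparison $(n-2)(n+1)=n^2-n-2<n^2-n=n(n-1)$ then gives $\sqrt{(n-1)(n-2)/n}<(n-1)/\sqrt{n+1}$, which finishes (ii).

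For part (i), I would take the threshold $R:=\frac{n(n-1)(100n-53)}{100(n-2)(n+1)}$, chosen precisely so that $\sqrt{(n-2)R/n}$ equals the target $\frac{1}{10}\sqrt{(n-1)(100n-53)/(n+1)}$; it then suffices to prove $\lambda_1(P_4^{\ast\ast}(3))<\sqrt{R}$ for $n\ge 21$, i.e., $q(\sqrt{R})>0$ with $q$ increasing at $\sqrt{R}$. The identity $R-(n+1)=-(153n^2-353n-200)/[100(n-2)(n+1)]$ lets one rewrite $q(\sqrt{R})=R^2-(n+1)R+3n-13-4\sqrt{R}$ as a rational function of $n$ plus a single $-4\sqrt{R}$ term; isolating the radical and squaring then reduces the desired positivity to a polynomial inequality in $n$ that is satisfied for $n\ge 21$, while the monotonicity of $q$ at $\sqrt{R}$ follows from the same derivative analysis used in (ii). The main obstacle will be this algebraic verification for (i): unlike the clean threshold $\sqrt{n-1}$ of (ii), $\sqrt{R}$ is not a polynomial expression in $n$, so positivity reduces to a sizable single-variable polynomial inequality whose sign change pinpoints the $n\ge 21$ cutoff.
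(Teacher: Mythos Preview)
Your very first step is wrong: the entrywise bound $\mathcal{ABS}(G)\preceq\sqrt{(n-2)/n}\,A(G)$ requires $d_i+d_j\le n$ on every edge, and that fails for diameter-$3$ graphs in $\Gamma_\infty(n)\cup\Gamma_\theta(n)$. For instance, in $\Gamma_\infty(n)$ take the two triangles $ca_1a_2$ and $cb_1b_2$ sharing the vertex $c$, put $k\ge1$ pendants at $a_1$ and the remaining $n-5-k$ pendants at $c$; this graph has diameter $3$, yet on the edge $ca_1$ one has $d_c+d_{a_1}=(4+n-5-k)+(2+k)=n+1$, so the corresponding $ABS$ weight is $\sqrt{(n-1)/(n+1)}>\sqrt{(n-2)/n}$ and your matrix inequality is false. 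You have carried over the bound $d_i+d_j\le n$ from the diameter-$4$ lemma, but it is precisely the drop in diameter that allows the pendants to concentrate on two adjacent base vertices and push the degree sum up to $n+1$.

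The paper uses the correct bound $d_i+d_j\le n+1$, giving the factor $\sqrt{(n-1)/(n+1)}$ in place of your $\sqrt{(n-2)/n}$. After that, the paper's route is the same as yours---Lemma~\ref{d3d4} for $d=3$, and the characteristic polynomial $x^{n-4}\varrho(x)$ of $A(P_4^{\ast\ast}(3))$ with $\varrho(x)=x^4-(n+1)x^2-4x+3n-13$, which you computed correctly. But with the right factor the algebra collapses: for (ii) one only needs $\lambda_1(P_4^{\ast\ast}(3))<\sqrt{n-1}$ (your own threshold, valid for $n\ge34$), and the product $\sqrt{(n-1)/(n+1)}\cdot\sqrt{n-1}=(n-1)/\sqrt{n+1}$ already \emph{is} the target, so your extra comparison $\sqrt{(n-1)(n-2)/n}<(n-1)/\sqrt{n+1}$ is unnecessary. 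Likewise for (i), the natural threshold becomes $\tfrac{1}{10}\sqrt{100n-53}$, whose product with $\sqrt{(n-1)/(n+1)}$ is exactly the stated bound; your complicated $R=\dfrac{n(n-1)(100n-53)}{100(n-2)(n+1)}$ was engineered only to compensate for the wrong factor and can be discarded once the degree-sum estimate is fixed.
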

\begin{proof}
For any two vertices $v_{i}$ and $v_{j}$ of $G$, $d_{i}+d_{j}\le n+1$. Therefore,  $\mathcal{ABS}(G)\preceq\sqrt{\dfrac{n-1}{n+1}}A(G)$. Now, by Lemmas \ref{b_1b_2} and \ref{d3d4},   $\eta_{1}(G)<\sqrt{\dfrac{n-1}{n+1}}\lambda_{1}(G)\le \sqrt{\dfrac{n-1}{n+1}}\lambda_{1}(P^{\ast\ast}_4(3))$ for all $n\ge 7$. That is,   
\begin{equation}\label{pl1}
\eta_{1}(G)<\sqrt{\dfrac{n-1}{n+1}}\lambda_{1}(P^{\ast\ast}_4(3)).
\end{equation}
We now show that $\lambda_{1}(P^{\ast\ast}_4(3))< \dfrac{1}{10}\sqrt{100n-53}$ for all $n\ge 21$ and $\lambda_{1}(P^{\ast\ast}_4(3))< \sqrt{n-1}$  for all $n\ge 34$. The adjacency matrix of $P^{\ast\ast}_4(3)$ is as follows.\\[2mm]
$$A(P^{\theta}_4(3))=\begin{bmatrix}
	0&1&0&0&0&\dots&0&0&0\\
    1&0&1&0&0&\dots&0&1&1\\
    0&1&0&1&1&\dots&1&1&1\\
    0&0&1&0&0&\dots&0&0&0\\
    0&0&1&0&0&\dots&0&0&0\\
    \vdots&&&&&\ddots&&&\vdots\\
    0&0&1&0&0&\dots&0&0&0\\
    0&1&1&0&0&\dots&0&0&0\\
    0&1&1&0&0&\dots&0&0&0
\end{bmatrix}.$$
Therefore, the characteristic polynomial of $A(P^{\ast\ast}_4(3))$ is $x^{n-4}\varrho(x)$, where $\varrho(x)=x^4-(n+1)x^2-4x+3n-13$. It is straightforward to verify that $\varrho(\dfrac{1}{10}\sqrt{100n-53})>0$ for all $n\ge 21$ and $\varrho(\sqrt{n-1})>0$ for all $n\ge 34$. Let $n\ge 21$ and  $\varrho_{1}(x)=\varrho(x+\dfrac{1}{10}\sqrt{100n-53})$. Then
\[\varrho_1^{\prime}(x)={\frac {1}{250}}\, \left( 300\,{x}^{2}+50\,n-103 \right) \sqrt {100\,n
	-53}-4+4\,{x}^{3}+{\frac {1}{250}}\, \left( 2500\,n-2090 \right) x>0\] for all $x\ge 0.$ This implies, $\varrho(x)$ is increasing for $x\ge \dfrac{1}{10}\sqrt{100n-53}$. So, $\varrho(x)\ge\varrho(0)>0$ for all $x\ge \dfrac{1}{10}\sqrt{100n-53}$. Hence, $\varrho(x)$ has no root in the interval $[\dfrac{1}{10}\sqrt{100n-53},\infty)$. Thus, for $n\ge 21$,
\begin{equation}\label{pl2}
\eta_{1}(P_{4}^{\ast\ast}(3))<\dfrac{1}{10}\sqrt{100n-53}.	
\end{equation}
 Now. let $n\ge 34$ and $\varrho_2(x)=\varrho(x+\sqrt{n-1})$. Then \[\varrho_2'(x)=\left( 12\,{x}^{2}+2\,n-6 \right) \sqrt {n-1}-4+4\,{x}^{3}+ \left( 10
 \,n-14 \right) x
 >0\] for all $x\ge 0$. This implies, $\varrho(x)$
is increasing for $x>\sqrt{n-1}$. So, $\varrho(x)$ has no root in the interval $[\sqrt{n-1},\infty)$. Thus, for $n\ge 34$, \begin{equation}\label{pl3}\lambda_1(P^{\ast\ast}_4(3))<\sqrt{n-1}.
\end{equation} 
From equations (\ref{pl1}), (\ref{pl2}) and (\ref{pl3}), we arrive at the result.
\end{proof}

\begin{figure}[H]
~~~~~~~~~~~~~~~~~~~~~~\includegraphics[width=0.55\textwidth]{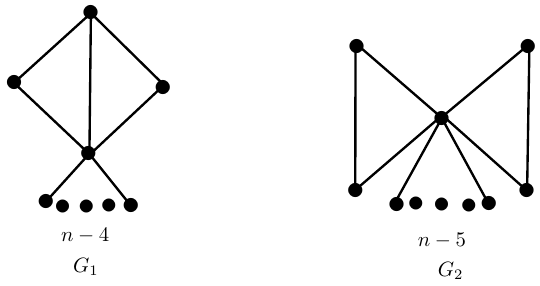}
\caption{Bicyclic graphs $G_1$ and $G_2$.}
\label{bicyclic2}
\end{figure}
\begin{lemma}\label{big2}
Let $G_{2}$ be the graph as depicted in Fig. \ref{bicyclic2}. Then \begin{enumerate}[(i)]\item $\dfrac{1}{10}\sqrt{\dfrac{(n-1)(100n-53)}{n+1}}<\eta_1(G_2)<\dfrac{1}{10}\sqrt{\dfrac{(n-1)(100n-53)}{n+1}}+\dfrac{1}{2n}$, the left side inequality is valid for $ 5\le n\le 41$ and right side inequality is valid for all $n\ge 22$.
\item $\dfrac{n-1}{\sqrt{n+1}}<\eta_1(G_2)<\dfrac{n-1}{\sqrt{n+1}}+\dfrac{3}{2n}$, the left side inequality valid is  for $n\ge 5$ and right side inequality is valid for all $n\ge 34$. \end{enumerate}
\end{lemma}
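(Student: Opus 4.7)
The plan is to exploit an equitable partition of $G_{2}$. From Fig.~\ref{bicyclic2}, $G_{2}$ has only a small number of orbits under its automorphism group -- the high-degree hub vertex, the few cycle vertices, and the block of twin pendants -- so the Perron eigenvector of $\mathcal{ABS}(G_{2})$ is constant on each orbit. Reducing $\mathcal{ABS}(G_{2})$ along this partition yields a quotient matrix $Q$ of small constant size whose largest eigenvalue is exactly $\eta_{1}(G_{2})$. Set $p(x):=\det(xI-Q)$; its coefficients are explicit in $n$ through the ABS weights $\sqrt{1-2/(d_{i}+d_{j})}$, which take only a few distinct values by the orbit structure.

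For part (i), write $\alpha(n):=\tfrac{1}{10}\sqrt{(n-1)(100n-53)/(n+1)}$. To establish the lower bound $\alpha(n)<\eta_{1}(G_{2})$ on $5\le n\le 41$, I would compute $p(\alpha(n))$, show it has the appropriate sign, and, exactly as in the proofs of Lemmas~\ref{bigirth4} and~\ref{bigirth3}, show by a derivative computation on the shifted polynomial $p(x+\alpha(n))$ that $p$ is positive and strictly increasing on $[\eta_{1}(G_{2}),\infty)$; this forces $\eta_{1}(G_{2})>\alpha(n)$. For the upper bound, put $\beta(n):=\alpha(n)+\tfrac{1}{2n}$, evaluate $p(\beta(n))$, and show $p(\beta(n))>0$ for $n\ge 22$, which together with the same monotonicity argument yields $\eta_{1}(G_{2})<\beta(n)$. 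Part (ii) is handled identically with $\alpha(n):=(n-1)/\sqrt{n+1}$ and $\beta(n):=\alpha(n)+\tfrac{3}{2n}$, valid on $n\ge 5$ and $n\ge 34$ respectively.

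The main obstacle will be the algebraic verification of the sign of $p$ at these shifted irrational arguments. Substituting $\alpha(n)$ or $\alpha(n)+c/n$ into a polynomial whose coefficients already involve square roots of rational functions of $n$ produces nested radicals; the standard remedy is to isolate a radical on one side, square to clear it (tracking signs), and reduce to a polynomial inequality in $n$ of fairly high degree. I expect the resulting polynomials to factor into pieces whose dominant terms settle the inequality asymptotically, while the two threshold values ($n=22$ in the upper bound of (i), $n=41$ in its lower bound, and $n=34$ in (ii)) indicate tightness near those endpoints; hence the argument will need an asymptotic estimate for large $n$ combined with a direct finite check for the small cases, paralleling the ``verified using Sage'' style already employed in the paper.
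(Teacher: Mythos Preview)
Your plan is correct and matches the paper's proof: the paper writes out $\mathcal{ABS}(G_{2})$ explicitly, factors its characteristic polynomial as $\dfrac{x^{n-6}}{2n(n+1)}\bigl(x+\tfrac{1}{\sqrt2}\bigr)^{2}\bigl(x-\tfrac{1}{\sqrt2}\bigr)\psi(x)$ with $\psi$ cubic (exactly your quotient-matrix determinant, since $G_{2}$ has the three orbits you describe), and then bounds $\eta_{1}(G_{2})$ by checking the sign of $\psi$ at each of the four test values together with monotonicity beyond the upper test points. One minor simplification: for the lower bounds you do not need the derivative/monotonicity step, since $\psi(\alpha(n))<0$ alone forces a root of the monic cubic above $\alpha(n)$.
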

\begin{proof}
The $ABS$ matrix of $G_2$ is as follows.

$$ {\small\begin{bmatrix}
		0& \alpha&\alpha&\alpha&\alpha&\beta&\dots&\beta\\[2mm]
		\alpha&0&\sqrt{\dfrac{1}{2}}&0&0&0&\dots&0\\[2mm]
		\alpha&\sqrt{\dfrac{1}{2}}&0&0&0&0&\dots&0\\[2mm]
		\alpha&0&0&0&\sqrt{\dfrac{1}{2}}&0&\dots&0\\[2mm]
		\alpha&0&0&\sqrt{\dfrac{1}{2}}&0&0&\dots&0\\[2mm]
		\beta&0&0&0&0&0&\dots&0\\[2mm]
		\vdots&&&&&\vdots&\ddots&\vdots\\[2mm]
		\beta&0&0&0&0&0&\dots&0\\
	\end{bmatrix},}$$

 where $\alpha=\sqrt{\dfrac{n-1}{n+1}}$ and $\beta=\sqrt{\dfrac{n-2}{n}}$. Therefore, characteristic polynomial of $\mathcal{ABS}(G_2)$ is $\dfrac{x^{n-6}}{2n(n+1)}\left(x+\dfrac{1}{\sqrt{2}}\right)^2(x-\dfrac{1}{\sqrt{2}})\psi(x)$, where $\psi(x)= 2n(n+1) {x}^{3}-\sqrt{2}n(n+1){x}^{2}-2\left( {n}^{3}-2\,{n}^{2}-n
+10 \right) x+\sqrt {2}({n}^{3}-6{n}^{2}+3n
+10).$\\[2mm]
Note that, $\psi(\dfrac{1}{10}\sqrt{\dfrac{(n-1)(100n-53)}{n+1}})<0$ for $5\le n\le 41$ and $\psi(\sqrt{n})>0$.
Also, $\psi(\dfrac{1}{10}\sqrt{\dfrac{(n-1)(100n-53)}{n+1}}+\dfrac{1}{2n})>0$ for all $n\ge 22$,  and $\psi(x)$ is increasing for $x\ge \dfrac{1}{10}\sqrt{\dfrac{(n-1)(100n-53)}{n+1}}+\dfrac{1}{2n}$. 
Thus, $\eta_{1}(G_{2})>\dfrac{1}{10}\sqrt{\dfrac{(n-1)(100n-53)}{n+1}}$ for $5\le n\le 41$, and $\eta_{1}(G_{2})<\dfrac{1}{10}\sqrt{\dfrac{(n-1)(100n-53)}{n+1}}+\dfrac{1}{2n}$ for all $n\ge 22$.\\[2mm]
We also find that,
 $\psi\left(\dfrac{n-1}{\sqrt{n+1}}\right)<0$ for $n\ge 5$. Moreover,  $\psi\left(\dfrac{n-1}{\sqrt{n+1}}+\dfrac{3}{2n}\right)>0$ for $n\ge 34$ and $\psi(x)$ is increasing for $x\ge  \dfrac{n-1}{\sqrt{n+1}}+\dfrac{3}{2n}$. Thus, $\eta_{1}(G_{2})>\dfrac{n-1}{\sqrt{n+1}}$ for $n\ge 5$ and  $\eta_1(G_2)<\dfrac{n-1}{\sqrt{n+1}}+\dfrac{3}{2n}$, for $n\ge 34$.\\
\end{proof}

The following theorem is a direct consequence of Lemmas~\ref{bigirth4},~\ref{bigirth3}
 and \ref{big2}.
\begin{theorem}\label{im}
For any graph $G\in \mathcal{B}(n)$ of order $n\ge 21$ with $G\ncong G_1$, we have  $\eta_1(G)\le \eta_1(G_2)$. Further, equality holds if and only if $G\cong G_2$.
\end{theorem}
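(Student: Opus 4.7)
The plan is to first use Theorem~\ref{theorem} and its $\theta$-analogue to reduce to $G\in \Gamma_\infty(n)\cup\Gamma_\theta(n)$: by Lemma~\ref{emptydelta}, iterated Kelmans operations strictly raise $\eta_1$, so any $G\in\mathcal{B}(n)$ lying outside the reduced class is dominated by some $G'$ in it. Provided the target graph differs from $G_1$, this suffices; the boundary scenario where the only Kelmans target is $G_1$ can be handled either by choosing a different vertex pair in the Kelmans step or by a separate direct comparison. Once $G\in \Gamma_\infty(n)\cup\Gamma_\theta(n)$, I split into three cases according to $\operatorname{diam}(G)\in\{2,3,4\}$ (the only possibilities, since every graph in the reduced class has girth $3$ and diameter at most $4$), and in each case match an upper bound on $\eta_1(G)$ against a lower bound on $\eta_1(G_2)$ supplied by the three lemmas.

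For $\operatorname{diam}(G)=4$, Lemma~\ref{bigirth4} gives $\eta_1(G)<\sqrt{(n-2)(n-1)/n}$ for $n\ge 16$. Squaring, $(n-2)(n+1)=n^2-n-2<n^2-n$, so $\sqrt{(n-2)(n-1)/n}<(n-1)/\sqrt{n+1}$; Lemma~\ref{big2}(ii) then gives $\eta_1(G_2)>(n-1)/\sqrt{n+1}$ for $n\ge 5$, closing the chain. For $\operatorname{diam}(G)=3$ I split on $n$: for $21\le n\le 41$, Lemmas~\ref{bigirth3}(i) and~\ref{big2}(i) sandwich the common quantity $\tfrac{1}{10}\sqrt{(n-1)(100n-53)/(n+1)}$; for $n\ge 34$, Lemmas~\ref{bigirth3}(ii) and~\ref{big2}(ii) sandwich $(n-1)/\sqrt{n+1}$; these two sub-intervals cover all $n\ge 21$.

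For $\operatorname{diam}(G)=2$, a direct enumeration within $\Gamma_\infty(n)\cup\Gamma_\theta(n)$ identifies only two diameter-$2$ candidates: $G_1$ (all $n-4$ pendants at a degree-$3$ vertex of the book $K_4-e$) and $G_2$ (all $n-5$ pendants at the common vertex of the bowtie). Attaching even one pendant to a degree-$2$ vertex of $\mathbb{U}(G)$ creates a pair at distance at least $3$, so no other diameter-$2$ graph arises. Since $G\ne G_1$, this case forces $G\cong G_2$, giving the equality. The step I expect to need most care is this diameter-$2$ enumeration together with the Kelmans-reduction bookkeeping when $G\notin \Gamma_\infty(n)\cup\Gamma_\theta(n)$; the arithmetic comparisons driving the diameter-$3$ and diameter-$4$ cases are immediate once the lemmas are properly aligned.
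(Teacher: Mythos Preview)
Your proposal is essentially the paper's own argument, only spelled out: the paper records Theorem~\ref{im} as a ``direct consequence of Lemmas~\ref{bigirth4},~\ref{bigirth3} and~\ref{big2}'', tacitly relying on the Kelmans reduction (Theorems~\ref{theorem} and its $\theta$-analogue) and on the observation that $G_1,G_2$ are the only diameter-$2$ members of $\Gamma_\infty(n)\cup\Gamma_\theta(n)$. Your added arithmetic check $\sqrt{(n-2)(n-1)/n}<(n-1)/\sqrt{n+1}$ and the split $21\le n\le 41$ versus $n\ge 34$ for the diameter-$3$ case are exactly the comparisons one needs; the ``Kelmans chain might terminate at $G_1$'' subtlety you flag is a genuine loose end that the paper's one-line proof also leaves unaddressed.
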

\begin{lemma}\label{big1}
Let $G_{1}$ be the graph as depicted in Fig. \ref{bicyclic2}. Then \begin{enumerate}[(i)]\item $\eta_1(G_1)>\dfrac{1}{10}\sqrt{\dfrac{(n-1)(100n-53)}{n+1}}+\dfrac{1}{2n}$ for $5\le n\le 33$.
\item $\eta_1(G_1)>\dfrac{n-1}{\sqrt{n+1}}+\dfrac{3}{2n}$ for $n\ge 5$.\end{enumerate}
\end{lemma}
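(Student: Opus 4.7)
The proof plan mirrors the argument of Lemma~\ref{big2}: compute the characteristic polynomial of $\mathcal{ABS}(G_1)$ explicitly, then verify by evaluation that this polynomial has a real root beyond each stated lower bound. For convenience write $L_1 := \frac{1}{10}\sqrt{(n-1)(100n-53)/(n+1)} + \frac{1}{2n}$ and $L_2 := \frac{n-1}{\sqrt{n+1}} + \frac{3}{2n}$ for the two targets.

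First I would write $\mathcal{ABS}(G_1)$ in block form analogous to the matrix displayed for $G_2$, exploiting that the pendant vertices attached to the dominating vertex of $G_1$ are pairwise equivalent. An equitable-partition (equivalently, cofactor) argument then produces a factorization of the characteristic polynomial of $\mathcal{ABS}(G_1)$ as $x^{k}\,r(x)\,\phi_1(x)$, where $r(x)$ collects the trivial linear factors coming from repeated pendant eigenvalues and $\phi_1(x)$ is a low-degree polynomial (depending on $n$) whose largest real root equals $\eta_1(G_1)$. The explicit form of $\phi_1$ is the central computational input for everything that follows.

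Given $\phi_1$, the strategy to prove $\eta_1(G_1) > L$ is to show $\phi_1(L) < 0$. Since the leading coefficient of $\phi_1$ is positive, $\phi_1(x) \to +\infty$ as $x \to \infty$, so continuity forces a real root of $\phi_1$ in $(L,\infty)$, and this root cannot exceed the Perron root $\eta_1(G_1)$. The whole task thus reduces to the two sign checks $\phi_1(L_1) < 0$ for $5 \le n \le 33$ and $\phi_1(L_2) < 0$ for all $n \ge 5$. This is exactly the mirror of the sign analysis in Lemma~\ref{big2}, with the inequality directions reversed.

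The main obstacle is carrying out these two sign checks cleanly, since both $L_1$ and $L_2$ involve radicals. For $L_2$, I expect $\phi_1(L_2)$ to rearrange into the form $A(n) + B(n)\sqrt{n+1}$ with $A, B$ polynomial in $n$; isolating the radical and squaring (with careful sign bookkeeping) reduces negativity to a polynomial-in-$n$ inequality that can be dispatched by an asymptotic argument for large $n$ together with a finite numerical check for the small values. A parallel rationalization will handle $\phi_1(L_1)$; because the range $5 \le n \le 33$ is finite, that verification can alternatively be completed by direct numerical computation in Sage, in the same spirit as small-$n$ cases are treated elsewhere in the paper.
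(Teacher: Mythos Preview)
Your proposal is correct and follows the same route as the paper: compute the characteristic polynomial of $\mathcal{ABS}(G_1)$ (which factors as $x^{n-4}\phi(x)$ with $\phi$ a monic quartic, so in fact no extra linear factor $r(x)$ appears), and then use the sign checks $\phi(L_1)<0$ and $\phi(L_2)<0$ together with $\phi(x)\to+\infty$ to force a root---hence $\eta_1(G_1)$---beyond each threshold. The only cosmetic difference is that the paper simply asserts the two sign inequalities, whereas you outline how one would verify them (rationalization plus asymptotics for $L_2$, finite numerical check for $L_1$); this is a welcome elaboration rather than a different method.
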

\begin{proof}
The $ABS$ matrix of $G_1$ is as follows.
$$\begin{bmatrix}
	0& \alpha&\gamma&\alpha&\beta&\dots&\beta\\
	\alpha&0&\sqrt{\dfrac{3}{5}}&0&0&\dots&0\\
	\gamma&\sqrt{\dfrac{3}{5}}&0&\sqrt{\dfrac{3}{5}}&0&\dots&0\\
	\alpha&0&\sqrt{\dfrac{3}{5}}&0&0&\dots&0\\
	\beta&0&0&0&0&\dots&0\\
	\vdots&&&&\vdots&\ddots&\vdots\\
	\beta&0&0&0&0&\dots&0
\end{bmatrix}.$$
where $\alpha=\sqrt{\dfrac{n-1}{n+1}}$, $\beta=\sqrt{\dfrac{n-2}{n}}$, and $\gamma=\sqrt{\dfrac{n}{n+2}}$. 
Therefore, characteristic polynomial of $\mathcal{ABS}(G_1)$ is $x^{n-4}\phi(x)$,
where \[\phi(x)={x}^{4}-\dfrac{1}{5}\,{\dfrac { \left( 5\,{n}^{4}+6\,{n}^{3}-7\,{n}^{2}+52
		\,n+80 \right) {x}^{2}}{n \left( n+2 \right)  \left( n+1
		\right) }}-\dfrac{4}{5}\,\sqrt {{\frac {n(n-1)15}{(n+2)(n+1)}}}{x}+\dfrac{6}{5}\,{\frac { \left( n-2 \right)  \left( n-4
		\right)}{n}}.\]
Since $\phi(x)$ is a monic polynomial with real roots, and  \[\phi(\dfrac{1}{10}\sqrt{\dfrac{(n-1)(100n-53)}{n+1}}+\dfrac{1}{2n})<0\,\, (5\le n\le 33)\] and
 
\[\phi\left(\dfrac{n-1}{\sqrt{n+1}}+\dfrac{3}{2n}\right)<0\,\,(n\ge5),\] the desired upper bounds follow.
\end{proof}
\begin{figure}[H]
~~~~~~~~~~~~~\includegraphics[width=0.5\textwidth]{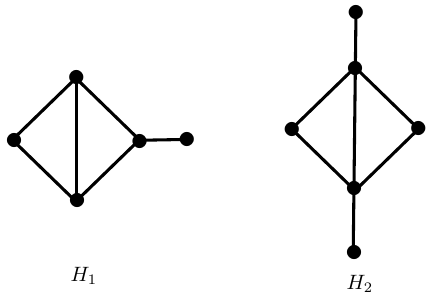}
 \caption{Bicyclic graphs $H_1$ and $H_2$.}
 \label{H1}
 \end{figure}
 \begin{theorem}
 Let $G\in \mathcal{B}(n)$ with $n\ge 5$. Let $H_{1}$ and $H_{2}$ be the graphs as depicted in Fig. {\rm\ref{H1}}.
 \begin{enumerate}[(i)]
 \item If $n=5$, then $G_1$ and $H_1$ are, respectively, the unique graphs with the first two maximum $ABS$ spectral radii, equal to $2.1637$ and $2.1023$.
 \item If $n = 6$, then $G_1$ and $H_2$ are, respectively, the unique graphs with the first two
maximum $ABS$ spectral radii, equal to $2.3220$ and $2.2915$.
 \item If $n\ge 7$, then $G_1$ and $G_2$ are, respectively, the unique graphs with the first two
maximum $ABS$ spectral radii.
 \end{enumerate}\end{theorem}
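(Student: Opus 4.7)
The plan is to assemble the theorem by combining Theorem~\ref{im}, which already identifies $G_2$ as the top candidate among graphs in $\mathcal{B}(n)\setminus\{G_1\}$ for $n \ge 21$, with the strict comparison $\eta_1(G_1) > \eta_1(G_2)$ supplied by sandwiching Lemmas~\ref{big1} and~\ref{big2} against a common threshold, and then to dispose of the small cases $5 \le n \le 21$ (including the explicit claims in (i) and (ii) identifying $H_1$ and $H_2$) by direct Sage computation in the same style the paper has already used for small-order verifications.

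For the analytic part, I would split into two subranges matching the domains of validity of the two lemma pairs. For $n \ge 34$, Lemma~\ref{big1}(ii) gives $\eta_1(G_1) > \frac{n-1}{\sqrt{n+1}} + \frac{3}{2n}$ while Lemma~\ref{big2}(ii) gives $\eta_1(G_2) < \frac{n-1}{\sqrt{n+1}} + \frac{3}{2n}$, so $\eta_1(G_1) > \eta_1(G_2)$ is immediate. For $22 \le n \le 33$, I would instead invoke Lemma~\ref{big1}(i) to lower-bound $\eta_1(G_1)$ by $\frac{1}{10}\sqrt{\frac{(n-1)(100n-53)}{n+1}} + \frac{1}{2n}$ and Lemma~\ref{big2}(i) to upper-bound $\eta_1(G_2)$ by exactly the same expression; the same sandwich yields the strict inequality on this range. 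Feeding these facts into Theorem~\ref{im}, we get $\eta_1(G) \le \eta_1(G_2) < \eta_1(G_1)$ for every $G \in \mathcal{B}(n)\setminus\{G_1,G_2\}$ with $n \ge 22$, with equality in the first inequality iff $G \cong G_2$. Thus $G_1$ and $G_2$ are the unique graphs realising the largest and the second-largest $ABS$ spectral radius on this range, establishing part (iii) for $n \ge 22$.

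For $5 \le n \le 21$, I would enumerate all bicyclic graphs of order $n$ in Sage and compute their $ABS$ spectral radii numerically. The enumeration is small enough to be routine, and the output should confirm: at $n=5$, the graphs $G_1$ and $H_1$ realise the values $2.1637$ and $2.1023$ respectively (part (i)); at $n=6$, the graphs $G_1$ and $H_2$ realise $2.3220$ and $2.2915$ (part (ii)); and for $7 \le n \le 21$ the top two are always $G_1$ and $G_2$ (filling the remaining window of part (iii)). The main obstacle is essentially bookkeeping: the threshold $n \ge 21$ in Theorem~\ref{im} lines up with the thresholds $n \ge 22$ and $n \ge 34$ in Lemmas~\ref{big1} and~\ref{big2} only up to a one-value gap at $n=21$, so this value must be absorbed into the computational range, and one must also check that $G_1, G_2, H_1, H_2$ really lie in $\mathcal{B}(n)$ for the relevant $n$ so that Theorem~\ref{im} and the Sage enumeration actually apply to them. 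Beyond this, no genuinely new inequality is required: the preceding lemmas were tuned precisely so that their validity ranges cover everything not handled computationally.
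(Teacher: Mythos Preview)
Your proposal is correct and follows essentially the same approach as the paper's own proof: for $n\ge 22$ combine Theorem~\ref{im} with the sandwiching inequalities of Lemmas~\ref{big2} and~\ref{big1} to separate $\eta_1(G_1)$ from $\eta_1(G_2)$, and handle $5\le n\le 21$ (including the explicit identifications of $H_1$, $H_2$ at $n=5,6$) by Sage enumeration. Your write-up is in fact more explicit than the paper's, which does not spell out the two subranges $22\le n\le 33$ and $n\ge 34$ nor the $n=21$ boundary issue you correctly flag.
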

\begin{proof}
Let $n\ge 22$. By Theorem~\ref{im}, Lemma~\ref{big2} and \ref{big1}, $G_1$ and $G_2$ are, respectively, the unique graphs with the first two
maximum $ABS$ spectral radii. For $7\le n\le 21$, it can be verified using Sage \cite{Sage} that $G_1$ has the largest and $G_2$ attains the second largest $ABS$ spectral radius. Similarly, for the case $n=5$ or $6$, we get the desired graphs.
\end{proof}
 \section*{Conclusion}
In this work, we identified the unique bipartite unicyclic graph with the maximum $ABS$ spectral radius. We also determined the unique bicyclic graphs attaining the largest and the second largest $ABS$ spectral radii. These results contribute to the broader problem of characterizing extremal graphs with respect to the $ABS$ spectral radius within specific classes of graphs.
\section*{Declarations}
\textbf{Availability of data and material}: Manuscript has no associated data.\\
\textbf{Competing interest}: The authors declare that they have no competing interest.\\
\textbf{Funding}: No funds received.\\
\textbf{Authors' contributions:} All authors have equally contributed.

\bibliography{name}
\bibliographystyle{abbrv}
\end{document}